\DeclareMathOperator{\relint}{relint}
\DeclareMathOperator{\relbd}{relbd}
\DeclareMathOperator{\conv}{conv}
\newcommand{\E}{\operatorname{E}}
\newtheorem{theorem}{Theorem}
\newtheorem{proposition}{Proposition}
\newtheorem{lemma}{Lemma}
\newtheorem{definition}{Definition}
\newtheorem{ex}{Example}
\numberwithin{equation}{section}
\title{Existence of Direct Density Ratio Estimators}
\author{
Erika Banzato\thanks{Department of Statistical Sciences, University of Padova; \url{erika.banzato@unipd.it}},~~
Mathias Drton\thanks{TUM School of Computation, Information and Technology and Munich Data Science Institute, Technical University of Munich; Munich Center for Machine Learning; \url{mathias.drton@tum.de}},~~
Kian Saraf-Poor\thanks{Department of Statistics, Columbia University; \url{ks4291@columbia.edu}},
~~and~
Hongjian Shi\thanks{TUM School of Computation, Information and Technology, Technical University of Munich; \url{hongjian.shi@tum.de}}
}
\date{}
\begin{document}

\setlength{\abovedisplayskip}{5pt}
\setlength{\belowdisplayskip}{5pt}
\setlength{\abovedisplayshortskip}{5pt}
\setlength{\belowdisplayshortskip}{5pt}

\maketitle
\begin{abstract}
    Many two-sample problems call for a comparison of two distributions from an exponential family.  Density ratio estimation methods provide ways to solve such problems through direct estimation of the differences in natural parameters.  The term direct indicates that one avoids estimating both marginal distributions.  In this context, we consider the Kullback--Leibler Importance Estimation Procedure (KLIEP), which has been the subject of recent work on differential networks.  Our main result shows that the existence of the KLIEP estimator is characterized by whether the average sufficient statistic for one sample belongs to the convex hull of the set of all sufficient statistics for data points in the second sample.  
    For high-dimensional problems it is customary to regularize the KLIEP loss by adding the product of a tuning parameter and a norm of the vector of parameter differences.  We show that the existence of the regularized KLIEP estimator requires the tuning parameter to be no less than the dual norm-based distance between the average sufficient statistic and the convex hull.  The implications of these existence issues are explored in applications to differential network analysis.   
\end{abstract}

\section{Introduction}
\label{sec:intro}

Let $\boldsymbol{X}=(\boldsymbol{X}_1,\dots,\boldsymbol{X}_{n_x})$ be an i.i.d.~sample drawn from a distribution $P$.  Let $\boldsymbol{Y}=(\boldsymbol{Y}_1,\dots,\boldsymbol{Y}_{n_y})$ be an independent second sample drawn from a distribution $Q$.  The resulting two-sample problem asks for statistical inference of differences between $P$ and $Q$.
An obvious route to solving this problem is to first separately estimate $P$ and $Q$ and then statistically inspect differences in the estimates.  However, in high-dimensional problems, even simple statistical models for $P$ and $Q$ may feature a large number of parameters, and estimation of $P$ and $Q$ may become difficult.

One prominent application giving rise to high-dimensional two-sample problems is differential network analysis.  Such an analysis infers differences between two graphical models \citep{doi:10.1146/annurev-statistics-060116-053803} in contexts such as studying a gene regulatory network under two experimental conditions \citep{MR4218944}.  In this setting, even the simplest pairwise interaction models are parametrized by interaction matrices that, for $m$-dimensional observations, hold on the order of $m^2$ many parameters, which results in challenging problems for higher dimension $m$.

In differential network analysis with Gaussian graphical models, the interaction matrix corresponds to the inverse covariance matrix.  Exploiting the matrix inversion relation between the interactions and the easily estimable covariances, \citet{MR3215346} shows how to directly estimate the difference between two inverse covariance matrices.  Their method avoids the estimation of the many nuisance parameters that encode the shared structure and is, thus, statistically feasible for high-dimensional problems provided mere sparsity of the differences.  This approach has been further developed by \citet{zhang:zou:2014}, \citet{MR3371002}, and \citet{yuan:xi:chen:deng:2017} but is restricted to the Gaussian setting.  

For more general settings involving exponential families, a framework to solve two-sample problems emerges from the literature on density ratio estimation \citep{MR2895762}.  This is particularly promising for differential network analysis with general graphical models of exponential families as it also allows one to overcome difficulties due to the fact that such general models typically feature an intractable normalizing constant.  The work of \citet{MR3234638} implements these ideas, and their KLIEP (Kullback--Leibler Importance Estimation Procedure) algorithm was later extended to an $\ell_1$-regularized version for high dimensions \citep{MR3662445}.  Moreover, \citet{MR4349123} developed a bootstrap-based method for formal statistical inference in a high-dimensional context.

In this paper, we study existence issues for the KLIEP estimator, without or with regularization.  The KLIEP loss is built by estimating the inner product between the difference vector and the sufficient statistic for the considered exponential family from one sample, say $\boldsymbol{X}$, and estimating a ratio of normalizing constants from the other sample $\boldsymbol{Y}$.
If the considered exponential family model has sufficient statistic $\mathbf{t}$, then the KLIEP loss depends on the data through the average sufficient statistic in the $\boldsymbol{X}$-sample,
\begin{equation}\label{eq:avetx}
\bar{\boldsymbol{t}}^x = \frac{1}{n_x}\sum_{i=1}^{n_x} \mathbf{t}(\boldsymbol{X_i}),
\end{equation}
as well as the set of all sufficient statistics for data points in the $\boldsymbol{Y}$-sample,
\begin{equation}\label{eq:setty}
\mathcal{T}^y = \left\{ \mathbf{t}(\boldsymbol{Y}_j): j=1,\dots,n_y\right\}.
\end{equation}
Taking the convex hull of the latter gives the convex polytope 
\begin{equation}\label{eq:Py}
\boldsymbol{C}^y =\,\text{conv}(\mathcal{T}^y ).
\end{equation}
In the unregularized case, we show that the existence of the KLIEP estimator is characterized by a polyhedral condition, namely, whether $\bar{\boldsymbol{t}}^x$ belongs to the polytope $\boldsymbol{C}^y$.   When regularizing, the KLIEP loss is modified by adding the product of a tuning parameter and a norm of the difference vector.  We show that the existence of the regularized KLIEP estimator requires the tuning parameter to be no less than the dual norm-based distance between the average sufficient statistic $\bar{\boldsymbol{t}}^x$ and the polytope~$\boldsymbol{C}^y$.  

The existence results are developed in Section~\ref{sec:exist}, following a review of preliminaries in Section~\ref{sec:preliminaries}.  In Section~\ref{sec:diffnets}, we take up a specific two-sample problem, namely,  differential network analysis with Gaussian graphical models, 
for which we explore the implications of existence issues in $\ell_1$-regularized KLIEP estimation.  Our experiments indicate that tuning parameter values considered in theoretical analysis of regularized KLIEP \citep{MR3662445} can be too small to ensure existence.  Therefore, we conclude the paper with a discussion in Section~\ref{sec:dis} on the potential use of an elastic net-type penalty to ensure existence.

\section{Density ratio estimation and the KLIEP loss}
\label{sec:preliminaries}

Let $\boldsymbol{X}=(\boldsymbol{X}_1,\dots,\boldsymbol{X}_{n_x})$ and $\boldsymbol{Y}=(\boldsymbol{Y}_1,\dots,\boldsymbol{Y}_{n_y})$ be two independent samples  drawn i.i.d.~from probability distributions $P$ and $Q$, respectively.  Assume $P$ and $Q$ belong to an exponential family with densities of the form
\begin{equation}
\label{eq:expfam}
    f(\boldsymbol{x};\boldsymbol{\theta}) = \frac{1}{Z(\boldsymbol{\theta})} 
    \exp\left\{
    \sum_{v=1}^{k} {\theta}_{v} \,{t}_{v}(\boldsymbol{x}) \right\},
\end{equation}
where $\mathbf{t}(\boldsymbol{x})=({t}_{1}(\boldsymbol{x}),\dots,{t}_{k}(\boldsymbol{x}))^{\top}$ is a $k$-dimensional sufficient statistic, $\boldsymbol{\theta}=(\theta_1,\dots,\theta_k)^{\top}$ is the associated natural parameter vector, and $Z(\boldsymbol{\theta})$ is the normalizing constant.  We write $\nu$ for the dominating measure of the exponential family.

If $P$ has density $p(\boldsymbol{x})=f(\boldsymbol{x};\boldsymbol{\theta}^{(p)})$ and $Q$ has density $q(\boldsymbol{x})=f(\boldsymbol{x};\boldsymbol{\theta}^{(q)})$, both with respect to the dominating measure $\nu$, then the parameter of interest is the difference vector
\begin{equation}
    \label{eq:difference}
    \boldsymbol{\Delta} = \boldsymbol{\theta}^{(p)}-\boldsymbol{\theta}^{(q)}.
\end{equation}
The difference vector emerges when considering the density ratio
\begin{equation}
    \label{eq:density-ratio}
       \frac{p(\boldsymbol{x})}{q(\boldsymbol{x})}\equiv\frac{ f(\boldsymbol{x};\boldsymbol{\theta}^{(p)})}{ f(\boldsymbol{x};\boldsymbol{\theta}^{(q)})} \;=\;
       \frac{Z(\boldsymbol{\theta}^{(q)})}{Z(\boldsymbol{\theta}^{(p)})}
    \exp\left\{\sum_{v=1}^{k} (\theta_{v}^{(p)}-{\theta}_{v}^{(q)}) \,\mathbf{t}_{v}(x_v)\right\} \propto
    \exp\left\{\boldsymbol{\Delta}^{\top} \, \mathbf{t}(\boldsymbol{x})\right\}.
\end{equation}
Since the density ratio integrates to one with respect to the measure $q(\boldsymbol{x})\mathrm{d}\nu(\boldsymbol{x})$, it follows that 
\begin{equation}
 \label{eq:r}
    \frac{p(\boldsymbol{x})}{q(\boldsymbol{x})}
    =\frac{1}{N(\boldsymbol{\Delta};q)}
    \exp\left\{\boldsymbol{\Delta}^{\top}  \textbf{t}(\boldsymbol{x})\right\}=:r_{\boldsymbol{\Delta},q}(\boldsymbol{x}),
\end{equation}
where 
\begin{align}
    N(\boldsymbol{\Delta}; q) &:= 
    \int q(\boldsymbol{x}) \exp\left\{\boldsymbol{\Delta}^{\top}  \textbf{t}(\boldsymbol{x})\right\} \mathrm{d}\nu(\boldsymbol{x}).\label{normalizer}
\end{align}
Rephrasing~\eqref{eq:r}, the true difference vector for $P$ and $Q$ is characterized as the vector $\boldsymbol{\Delta}$ for which  the following equality among densities holds:
\begin{equation}
\label{eq:p=rq}
    p(\boldsymbol{x})= r_{\boldsymbol{\Delta},q}(\boldsymbol{x})\cdot q(\boldsymbol{x}).
\end{equation}

Write $D_\mathrm{KL}(p_1\parallel p_2)$ for the Kullback--Leibler (KL) divergence of a distribution with density $p_2$ from another distribution with density $p_1$.  It is well-known that the KL divergence is nonnegative and vanishes if and only if the two distributions are equal.   
Hence, by Equation~\eqref{eq:p=rq}, the true difference vector~$\boldsymbol{\Delta}$  in our two-sample problem is characterized as the unique minimizer of the function
\begin{align}
\boldsymbol{\Delta} \mapsto
    D_\mathrm{KL}(p\parallel r_{\boldsymbol{\Delta},q}\, q)
    & = \int p(\boldsymbol x) \log \frac{p(\boldsymbol x)}{q(\boldsymbol x)r_{\boldsymbol{\Delta},q}(\boldsymbol{x})}\mathrm{d}\nu(\boldsymbol{x}) \notag\\
    & = -\int p(\boldsymbol x) \log r_{\boldsymbol{\Delta},q}(\boldsymbol{x})\mathrm{d}\nu(\boldsymbol{x}) +\text{const}.\label{eq:DKL}
\end{align}
From definitions in \eqref{eq:r} and~\eqref{normalizer}, we obtain the following fact, which is the basis for the \textit{Kullback--Leibler importance estimation procedure} (KLIEP).

\begin{proposition}
\label{prop:KLIEP-loss}
Let $p(\boldsymbol{x})=f(\boldsymbol{x};\boldsymbol{\theta}^{(p)})$ and $q(\boldsymbol{x})=f(\boldsymbol{x};\boldsymbol{\theta}^{(q)})$ be two densities from the considered exponential family.  Then the difference vector $\boldsymbol{\Delta} = \boldsymbol{\theta}^{(p)}-\boldsymbol{\theta}^{(q)}$ is the unique minimizer of the \emph{population KLIEP loss}
\begin{equation}
\ell_\mathrm{KL}(\boldsymbol{\Delta}; p,q)=-\boldsymbol{\Delta}^{\top} \E_p[ \mathbf{t}(\boldsymbol{X}_1)] + \log \E_q[\exp\{\boldsymbol{\Delta}^{\top}  \mathbf{t}(\boldsymbol{Y}_1)\}],
\end{equation}
where $\E_p$ and $\E_q$ are expectations, and subscripts $p$ and $q$ emphasize that $\boldsymbol{X}_1\sim P$ and $\boldsymbol{Y}_1\sim Q$.
\end{proposition}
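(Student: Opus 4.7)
The plan is to reduce the claim directly to the characterization already derived in equation~\eqref{eq:DKL}, which says that the true difference vector $\boldsymbol{\Delta}=\boldsymbol{\theta}^{(p)}-\boldsymbol{\theta}^{(q)}$ is the unique minimizer of
\[
\boldsymbol{\Delta} \mapsto -\int p(\boldsymbol{x})\log r_{\boldsymbol{\Delta},q}(\boldsymbol{x})\mathrm{d}\nu(\boldsymbol{x})
\]
up to an additive constant that does not depend on $\boldsymbol{\Delta}$. The task is then just to algebraically rewrite this integral and recognise it as $\ell_\mathrm{KL}(\boldsymbol{\Delta};p,q)$.

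First I would expand the log of the ratio $r_{\boldsymbol{\Delta},q}(\boldsymbol{x})$ using its definition~\eqref{eq:r}, giving $\log r_{\boldsymbol{\Delta},q}(\boldsymbol{x}) = \boldsymbol{\Delta}^{\top}\mathbf{t}(\boldsymbol{x}) - \log N(\boldsymbol{\Delta};q)$. Integrating against $p(\boldsymbol{x})\mathrm{d}\nu(\boldsymbol{x})$ and using that $p$ is a probability density produces the linear term $-\boldsymbol{\Delta}^{\top}\E_p[\mathbf{t}(\boldsymbol{X}_1)]$ together with the term $\log N(\boldsymbol{\Delta};q)$. Next I would observe that, by the definition~\eqref{normalizer} of the normalizer, $N(\boldsymbol{\Delta};q)=\E_q[\exp\{\boldsymbol{\Delta}^{\top}\mathbf{t}(\boldsymbol{Y}_1)\}]$, so the expression matches $\ell_\mathrm{KL}(\boldsymbol{\Delta};p,q)$ verbatim. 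Since this function differs from the KL divergence in~\eqref{eq:DKL} only by an additive constant, the minimizers coincide.

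For uniqueness, the argument in~\eqref{eq:DKL} already uses the characterizing property of the Kullback--Leibler divergence: it vanishes only when the two densities agree $\nu$-almost everywhere. Under the standard assumption that the exponential family~\eqref{eq:expfam} is identifiable (i.e., in minimal representation so that $\boldsymbol{\theta}\mapsto f(\,\cdot\,;\boldsymbol{\theta})$ is injective), the equality $p = r_{\boldsymbol{\Delta},q}\cdot q$ pins down $\boldsymbol{\Delta}$ uniquely as $\boldsymbol{\theta}^{(p)}-\boldsymbol{\theta}^{(q)}$. Alternatively, one can argue by strict convexity: the Hessian of $\boldsymbol{\Delta}\mapsto \log\E_q[\exp\{\boldsymbol{\Delta}^{\top}\mathbf{t}(\boldsymbol{Y}_1)\}]$ is the covariance matrix of $\mathbf{t}(\boldsymbol{Y}_1)$ under the tilted distribution, which is positive definite in the identifiable/minimal case, so $\ell_\mathrm{KL}(\,\cdot\,;p,q)$ is strictly convex and has a unique minimizer.

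The argument is mainly a bookkeeping exercise, with no real obstacle beyond making the identification $N(\boldsymbol{\Delta};q)=\E_q[\exp\{\boldsymbol{\Delta}^{\top}\mathbf{t}(\boldsymbol{Y}_1)\}]$ and noting that the additive term dropped in~\eqref{eq:DKL} does not depend on $\boldsymbol{\Delta}$. The only mildly subtle point is the uniqueness claim, where one needs to invoke identifiability of the exponential family (or strict convexity on the interior of the natural parameter space); this is standard but worth flagging explicitly rather than leaving implicit.
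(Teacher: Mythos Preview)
Your proposal is correct and follows essentially the same route as the paper: both expand $\log r_{\boldsymbol{\Delta},q}$ via~\eqref{eq:r} and~\eqref{normalizer}, integrate against $p$ to identify the integral with $\ell_\mathrm{KL}(\boldsymbol{\Delta};p,q)$, and then invoke the KL characterization from~\eqref{eq:DKL} for the minimizer. Your additional remarks on identifiability and strict convexity for uniqueness are a welcome clarification beyond what the paper spells out, but the core argument is the same.
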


\begin{proof}[Proof of Proposition~\ref{prop:KLIEP-loss}]
We first prove the KLIEP loss is equal to the first term in \eqref{eq:DKL}. It holds~that
\begin{align*}
-\int p(\boldsymbol x) \log r_{\boldsymbol{\Delta},q}(\boldsymbol{x})\mathrm{d}\nu(\boldsymbol{x})
&=-\int p(\boldsymbol x) \log \frac{\exp\left\{\boldsymbol{\Delta}^{\top}  \textbf{t}(\boldsymbol{x})\right\}}{N(\boldsymbol{\Delta}; q)} \mathrm{d}\nu(\boldsymbol{x}) \\
&=-\int p(\boldsymbol x)  \boldsymbol{\Delta}^{\top}  \textbf{t}(\boldsymbol{x}) \mathrm{d}\nu(\boldsymbol{x}) + \int p(\boldsymbol x) \log N(\boldsymbol{\Delta}; q) \mathrm{d}\nu(\boldsymbol{x}) \\
&=-\boldsymbol{\Delta}^{\top} \int p(\boldsymbol x) \textbf{t}(\boldsymbol{x}) \mathrm{d}\nu(\boldsymbol{x}) + \log N(\boldsymbol{\Delta}; q) \\
&=-\boldsymbol{\Delta}^{\top} \int p(\boldsymbol{x}) \mathbf{t}(\boldsymbol{x}) \mathrm{d}\nu(\boldsymbol{x})
  +\log \int q(\boldsymbol{x}) \exp\{\boldsymbol{\Delta}^{\top} \mathbf{t}(\boldsymbol{x})\} \mathrm{d}\nu(\boldsymbol{x})\\
&=-\boldsymbol{\Delta}^{\top} \E_p[ \mathbf{t}(\boldsymbol{X}_1)] + \log \E_q[\exp\{\boldsymbol{\Delta}^{\top}  \mathbf{t}(\boldsymbol{Y}_1)\}] \\
&=\ell_\mathrm{KL}(\boldsymbol{\Delta}; p,q).
\end{align*}
The rest follows from the fact the true difference vector is the unique minimizer of the function $\boldsymbol{\Delta} \mapsto
    D_\mathrm{KL}(p\parallel r_{\boldsymbol{\Delta},q}\, q)$.
\end{proof}

For practical estimation, we may replace population expectations by sample averages.

\begin{definition}[KLIEP] 
Given two samples $\boldsymbol{X}=(\boldsymbol{X}_1,\dots,\boldsymbol{X}_{n_x})$ and $\boldsymbol{Y}=(\boldsymbol{Y}_1,\dots,\boldsymbol{Y}_{n_y})$, the (empirical) KLIEP loss is defined as
\begin{align}
    \ell_\mathrm{KL}(\boldsymbol{\Delta})&\;\equiv\; \ell_\mathrm{KL}(\boldsymbol{\Delta}; \boldsymbol{X}, \boldsymbol{Y}) 
    \label{eq:kliep}
               \;=\; 
           -\frac{1}{n_x} \sum_{i=1}^{n_x} \boldsymbol{\Delta}^{\top}  \mathbf{t}(\boldsymbol{X}_i) +
           \log\left[ \frac{1}{n_y} \sum_{j=1}^{n_y}\exp\left\{\boldsymbol{\Delta}^{\top}  \mathbf{t}(\boldsymbol{Y}_j)\right\}\right].
\end{align}
The KLIEP estimator $\hat{\boldsymbol{\Delta}}$ is the minimizer of $\ell_\mathrm{KL}(\boldsymbol{\Delta})$.
\end{definition}

Using results from \citet[page~74]{MR2061575}, it is easily seen that the KLIEP loss $\ell_\mathrm{KL}$ is a convex function of $\boldsymbol{\Delta}$, and a global minimizer can be found using standard optimization techniques.  However, as we characterize in this paper, such a minimizer need not exist.  Indeed, the loss may be unbounded or, in special situations, bounded but with the infimum not attained.  Our characterization of when such inexistence occurs will reflect the fact that the KLIEP loss treats the two samples asymmetrically; see also the discussion in \citet{MR3662445}.

\section{Existence of KLIEP estimators}
\label{sec:exist}

In this section, we characterize for which datasets a minimizer of the KLIEP loss $\ell_\mathrm{KL}$ exists.  
Given two samples $\boldsymbol{X}=(\boldsymbol{X}_1,\dots,\boldsymbol{X}_{n_x})$ and $\boldsymbol{Y}=(\boldsymbol{Y}_1,\dots,\boldsymbol{Y}_{n_y})$, $\bar{\boldsymbol{t}}^{x}$ and $\mathcal{T}^y$ defined respectively by Equations~\eqref{eq:avetx} and~\eqref{eq:setty} will be the sufficient statistics for (\ref{eq:kliep}), where $\bar{\boldsymbol{t}}^{x}$ and $\mathcal{T}^y$ are obtained from the samples $\boldsymbol{X}$ and $\boldsymbol{Y}$, respectively. 
To ease the notation, we write $\bar{\boldsymbol{t}}^x=(\bar{t}^{x}_1,\dots,\bar{t}^{x}_k)^{\top}$ with each entry defined as 
\begin{equation}\label{eq:defR}
\bar{t}^{x}_v:=\frac{1}{n_x} \sum_{i=1}^{n_x} {t}_{v}(\boldsymbol{X}_i), \quad v=1,\dots,k,
\end{equation}
and write
$\mathbf{t}(\boldsymbol{Y}_j)\equiv \boldsymbol{t}^{y}_j = (t_{1j}^{y},\dots,t_{kj}^{y})^{\top}$ with each entry given by 
\begin{equation}\label{eq:defT}
t^{y}_{vj}:= {t}_v(\boldsymbol{Y}_j), \quad v=1,\dots,k,~j=1,\dots,n_y.
\end{equation}
In the following, for a set $S\subseteq \mathbb{R}^k$, 
let $\mathrm{span}(S)$ denote the spanned space of $S$,
let $\relint(S)$ denote the relative interior of $S$,
and let $\relbd(S)$ denote the relative boundary of $S$;
see, e.g., Chapter 6 of \citet{MR0274683} for a comprehensive introduction to relative interior and relative boundary.

\subsection{KLIEP estimators}

The following theorem summarizes how the existence of the KLIEP estimator is determined by a polyhedral condition, specifically, whether $\bar{\boldsymbol{t}}^x$ belongs to the relative interior of the polytope~$\boldsymbol{C}^y$ or not.  
The KLIEP estimator, as a minimizer of the KLIEP loss $\ell_\mathrm{KL}$, does not exist when~$\bar{\boldsymbol{t}}^x$ lies on the relative boundary or outside of~$\boldsymbol{C}^y$; the latter will happen with high probability in the high-dimensional case.  
This also motivates us to consider the regularized KLIEP estimators, which will be discussed in the next section.

\begin{theorem} \label{theorem:1}
Given two samples $\boldsymbol{X}=(\boldsymbol{X}_1,\dots,\boldsymbol{X}_{n_x})$ and $\boldsymbol{Y}=(\boldsymbol{Y}_1,\dots,\boldsymbol{Y}_{n_y})$,
let $\bar{\boldsymbol{t}}^x$ and $\boldsymbol{C}^{y}$ be defined by Equations~\eqref{eq:avetx} and \eqref{eq:Py}, respectively.  
Then it holds that
\begin{enumerate}[itemsep=-.25ex, label=(\roman*)]
	\item $\bar{\boldsymbol{t}}^x \in \relint(\boldsymbol{C}^{y})$ if and only if $\ell_\mathrm{KL}$ achieves a global minimum;
	\item $\bar{\boldsymbol{t}}^x \in \relbd(\boldsymbol{C}^{y})$ if and only if $\ell_\mathrm{KL}$ is bounded from below but does not attain a global minimum;
	\item $\bar{\boldsymbol{t}}^x \notin \boldsymbol{C}^{y}$ if and only if $\ell_\mathrm{KL}$ is unbounded from below.
\end{enumerate}
\end{theorem}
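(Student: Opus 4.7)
The plan is to reformulate the loss so that the three geometric hypotheses translate into conditions at the origin, and then attack the three cases with a separating hyperplane, Jensen's inequality, and a gradient argument, respectively. Setting $\boldsymbol{z}_j := \mathbf{t}(\boldsymbol{Y}_j) - \bar{\boldsymbol{t}}^x$, I rewrite
\[
\ell_\mathrm{KL}(\boldsymbol{\Delta}) \;=\; \log\Bigl[\tfrac{1}{n_y}\sum_{j=1}^{n_y}\exp\{\boldsymbol{\Delta}^\top \boldsymbol{z}_j\}\Bigr],
\]
so the three hypotheses become $\boldsymbol{0}\in\relint\conv\{\boldsymbol{z}_j\}$, $\boldsymbol{0}\in\relbd\conv\{\boldsymbol{z}_j\}$, and $\boldsymbol{0}\notin\conv\{\boldsymbol{z}_j\}$. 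Two identities will do most of the work. First, the gradient $\nabla\ell_\mathrm{KL}(\boldsymbol{\Delta}) = \sum_j w_j(\boldsymbol{\Delta})\boldsymbol{z}_j$ uses weights $w_j(\boldsymbol{\Delta}) \propto \exp(\boldsymbol{\Delta}^\top \boldsymbol{z}_j)$ that are strictly positive and sum to $1$. Second, whenever $\boldsymbol{0}=\sum_j \alpha_j\boldsymbol{z}_j$ is a convex combination, Jensen's inequality gives $\tfrac{1}{n_y}\sum_j \exp(\boldsymbol{\Delta}^\top \boldsymbol{z}_j) \geq 1/(n_y\max_j \alpha_j)$, so $\ell_\mathrm{KL}$ is bounded below as soon as $\boldsymbol{0}\in\conv\{\boldsymbol{z}_j\}$. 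Case~(iii) then follows from the separating hyperplane theorem: pick $\boldsymbol{v}$ with $\boldsymbol{v}^\top\boldsymbol{z}_j<0$ for all $j$; along the ray $t\boldsymbol{v}$ every summand inside the logarithm tends to $0$, so $\ell_\mathrm{KL}(t\boldsymbol{v})\to -\infty$.

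For case (i), I use the explicit description $\relint\conv\{\boldsymbol{z}_j\} = \{\sum_j \alpha_j\boldsymbol{z}_j : \alpha_j>0,\ \sum_j\alpha_j=1\}$ to get a strict convex representation of $\boldsymbol{0}$, which combined with the Jensen bound above gives $\ell_\mathrm{KL}$ bounded below. For attainment I restrict to $V:=\lin\{\boldsymbol{z}_1,\ldots,\boldsymbol{z}_{n_y}\}$, observing that $\ell_\mathrm{KL}$ is invariant under translations by $V^\perp$ (since $\boldsymbol{\eta}^\top\boldsymbol{z}_j=0$ when $\boldsymbol{\eta}\in V^\perp$), and establish coercivity on $V$. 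For any unit $\boldsymbol{v}\in V$, the identity $\sum_j \alpha_j \boldsymbol{v}^\top\boldsymbol{z}_j=0$ with all $\alpha_j>0$, together with the fact that not all $\boldsymbol{v}^\top\boldsymbol{z}_j$ can vanish (else $\boldsymbol{v}\perp V$), forces $c(\boldsymbol{v}):=\max_j \boldsymbol{v}^\top\boldsymbol{z}_j>0$. Continuity of $c$ and compactness of the unit sphere in $V$ deliver $c_0:=\min_{\|\boldsymbol{v}\|=1,\,\boldsymbol{v}\in V}c(\boldsymbol{v})>0$, so $\ell_\mathrm{KL}(t\boldsymbol{v})\geq tc_0-\log n_y$ for all $t>0$. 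Coercivity and continuity then give a minimizer on $V$, hence on $\mathbb{R}^k$.

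Case (ii) is again bounded below by Jensen, and non-attainment uses the gradient identity: any minimizer $\boldsymbol{\Delta}^*$ is critical, so $\boldsymbol{0}=\sum_j w_j(\boldsymbol{\Delta}^*)\boldsymbol{z}_j$ with all $w_j>0$, placing $\boldsymbol{0}$ in $\relint\conv\{\boldsymbol{z}_j\}$ and contradicting $\boldsymbol{0}\in\relbd$. The reverse implications then follow for free: the three hypotheses on $\bar{\boldsymbol{t}}^x$ and the three possible loss behaviors (minimum attained; bounded below but infimum not attained; unbounded below) are each mutually exclusive and exhaustive, so the forward implications force the reverse ones. I expect the main obstacle to be the coercivity argument in case (i): because $\ell_\mathrm{KL}$ is not strictly convex in general (it is constant along $V^\perp$), attainment cannot be concluded by a black-box strict-convexity argument, and one must carefully restrict to $V$ and extract uniform directional growth via compactness of the unit sphere.
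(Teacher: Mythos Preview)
Your proposal is correct and follows essentially the same route as the paper: the centering $\boldsymbol{z}_j=\mathbf{t}(\boldsymbol{Y}_j)-\bar{\boldsymbol{t}}^x$, the softmax-weighted gradient identity to force $\boldsymbol{0}\in\relint$ whenever a minimizer exists, the separating-hyperplane argument for unboundedness, and the coercivity on $V=\lin\{\boldsymbol{z}_j\}$ via the compactness bound $\min_{\|\boldsymbol{v}\|=1,\,\boldsymbol{v}\in V}\max_j\boldsymbol{v}^\top\boldsymbol{z}_j>0$ all match the paper's Lemma~1 and surrounding calculations. The only cosmetic differences are that you obtain the lower bound on $\ell_\mathrm{KL}$ via Jensen's inequality (the paper uses the cruder bound $\tfrac{1}{n_y}\sum_j e^{a_j}\ge \tfrac{1}{n_y}e^{\max_j a_j}$) and that you organize the equivalences through a final trichotomy argument rather than proving both directions of (i) and (iii) separately.
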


Before providing a strict proof, we first demonstrate how the KLIEP loss behaves (the ``only if'' part) under scenarios (i), (ii), and (iii) using the following toy example in dimension one.

\begin{ex}
When $\mathcal{T}^y=\{-1,0,1,2\}$, consider $\bar{\boldsymbol{t}}^x=1,2,3$ (where the conditions in scenarios (i), (ii), and (iii) are satisfied, respectively).
The KLIEP loss $\ell_\mathrm{KL}$ with respect to the difference vector parameter $\boldsymbol{\Delta}$ is portrayed in Figure~\ref{fig:toy}. We notice that the KLIEP loss behaves as stated in Theorem~\ref{theorem:1}. 
\end{ex}

\begin{figure}[t]
\captionsetup[subfigure]{labelformat=empty}
    \begin{subfigure}{0.33\textwidth}
    \centering
    \includegraphics[width=\textwidth]{~./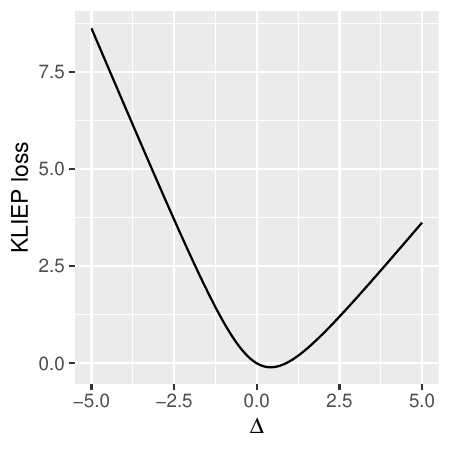}
    \caption{(i) $\bar{\boldsymbol{t}}^x=1 \in \relint(\boldsymbol{C}^{y})$}
    \end{subfigure}\hfill
    \begin{subfigure}{0.33\textwidth}
    \centering
    \includegraphics[width=\textwidth]{~./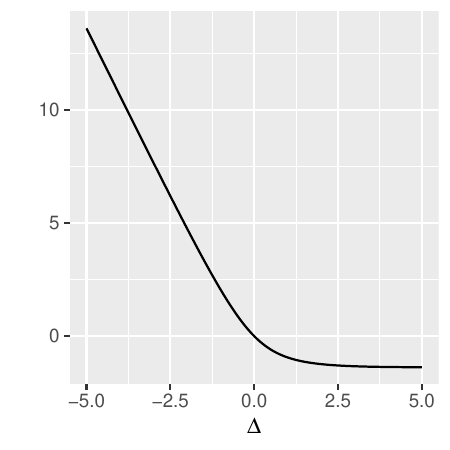}
    \caption{(ii) $\bar{\boldsymbol{t}}^x=2 \in \relbd(\boldsymbol{C}^{y})$}
    \end{subfigure}\hfill
    \begin{subfigure}{0.33\textwidth}
    \includegraphics[width=\textwidth]{~./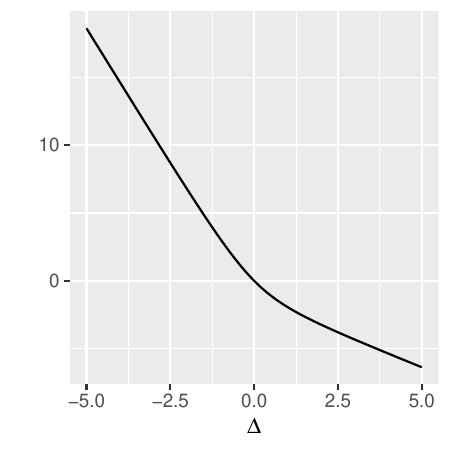}
    \caption{(iii) $\bar{\boldsymbol{t}}^x=3 \notin \boldsymbol{C}^{y}$}
    \centering
    \end{subfigure}
    \caption{{The KLIEP loss function when $\bar{\boldsymbol{t}}^{x}$ lies in the relative interior (when~$\bar{\boldsymbol{t}}^{x}=1$), on the relative boundary (when~$\bar{\boldsymbol{t}}^{x}=2$), and outside (when~$\bar{\boldsymbol{t}}^{x}=3$) of the convex hull of $\mathcal{T}^y=\{-1,0,1,2\}$.}}
    \label{fig:toy}
\end{figure}

\begin{proof}[Proof of Theorem~\ref{theorem:1}]
We begin with assertion (i).  Recall that the convex hull $\boldsymbol{C}^{y}$ of the set $\mathcal{T}^{y}$ can be represented by
\[
\boldsymbol{C}^{y} = \left\{\sum_{j=1}^{n_y}\alpha_j \boldsymbol{t}^{y}_j: \alpha_j\ge 0,~\sum_{j=1}^{n_y}\alpha_j=1\right\}
\]
and the relative interior of the convex hull $\boldsymbol{C}^{y}$ can be represented by
\[
\relint(\boldsymbol{C}^{y}) = \left\{\sum_{j=1}^{n_y}\alpha_j \boldsymbol{t}^{y}_j: \alpha_j > 0,~\sum_{j=1}^{n_y}\alpha_j=1\right\};
\]
see \cite[page~155]{MR0780745}.  
First, we show that $\boldsymbol{R} \in \relint(\boldsymbol{C})$ if $\ell_\mathrm{KL}$ achieves a global minimum at $\boldsymbol{\Delta}_0$.  Let us expand $\ell_\mathrm{KL}$ as
\begin{align}
    \ell_\mathrm{KL}(\boldsymbol{\Delta}) &= -\boldsymbol{\Delta}^{\top}  \bar{\boldsymbol{t}}^x + \log\left(\frac{1}{n_y}\sum_{j=1}^{n_y} \exp\{\boldsymbol{\Delta}^{\top}  \boldsymbol{t}^{y}_j\}\right) \notag\\
    &=-\sum_{v=1}^{k}\Delta_v \bar{t}^{x}_v + \log\left(\frac{1}{n_y}\sum_{j=1}^{n_y} \exp\left\{\sum_{v=1}^{k}\Delta_v t^{y}_{vj}\right\}\right).
\end{align}
Notice that a global minimum at $\boldsymbol{\Delta}_0$ is also a local minimum since $\mathbb{R}^k$, the domain of $\boldsymbol{\Delta}$, is an open set.
According to the interior extremum theorem, when the gradient $\nabla\ell_\mathrm{KL}(\boldsymbol{\Delta})$ exists, it must be the zero vector at a local minimum.
Here, the $v$-th component of the gradient $\nabla\ell_\mathrm{KL}(\boldsymbol{\Delta})$ is given~by
\begin{align}\label{eq:deriv}
    \frac{\partial\ell_\mathrm{KL}(\boldsymbol{\Delta})}{\partial \Delta_v} 
    &= -\,\bar{t}^{x}_v + \frac{\sum_{j=1}^{n_y} \exp\left\{\boldsymbol{\Delta}^{\top} \boldsymbol{t}^{y}_j\right\}t^{y}_{vj}}{\sum_{j=1}^{n_y} \exp\left\{\boldsymbol{\Delta}^{\top} \boldsymbol{t}^{y}_j \right\}} \notag\\
    &= -\,\bar{t}^{x}_v + \sum_{j=1}^{n_y} \frac{\exp\left\{\boldsymbol{\Delta}^{\top} \boldsymbol{t}^{y}_j\right\}}{\sum_{i=1}^{n_y} \exp\left\{\boldsymbol{\Delta}^{\top} \boldsymbol{t}^{y}_i\right\}} t^{y}_{vj} \notag\\
    &= -\,\bar{t}^{x}_v + \sum_{j=1}^{n_y} \alpha_j(\boldsymbol{\Delta}) t^{y}_{vj},\quad  v = 1,\dots,k,
\end{align}
where
\begin{align}
    \alpha_j(\boldsymbol{\Delta}) := \frac{\exp\left\{\boldsymbol{\Delta}^{\top} \boldsymbol{t}^{y}_j\right\}}{\sum_{i=1}^{n_y} \exp\left\{\boldsymbol{\Delta}^{\top} \boldsymbol{t}^{y}_i\right\}},\quad  j = 1,\dots,n_y.
\end{align}
\noindent Note that for any $\boldsymbol{\Delta}$, $\alpha_j(\boldsymbol{\Delta}) > 0$, $j =  1, \dots, n_y$ and $\sum_{j=1}^{n_y} \alpha_j(\boldsymbol{\Delta}) = 1$. 
Then at the global and local minimizer $\boldsymbol{\Delta}_0$, it holds that
\begin{equation}\label{eq:eqmin}
\begin{pmatrix}
\, 0\,  \\ \, 0\,  \\ \, \vdots\,  \\ \, 0\, 
\end{pmatrix}
=\nabla\ell_\mathrm{KL}(\boldsymbol{\Delta}_0)=
\begin{pmatrix}
-\bar{t}^{x}_1 + \sum_{j=1}^{n_y} \alpha_j(\boldsymbol{\Delta}_0) t^{y}_{1j} \\
-\bar{t}^{x}_2 + \sum_{j=1}^{n_y} \alpha_j(\boldsymbol{\Delta}_0) t^{y}_{2j} \\
\vdots \\
-\bar{t}^{x}_k + \sum_{j=1}^{n_y} \alpha_j(\boldsymbol{\Delta}_0) t^{y}_{kj} 
\end{pmatrix}.
\end{equation}
We deduce that
\[
\bar{t}^{x}_v = \sum_{j=1}^{n_y} \alpha_j(\boldsymbol{\Delta}_0) t^{y}_{vj},\quad  v = 1,\dots,k,
\]
and, thus,
\[
\bar{\boldsymbol{t}}^{x} = \sum_{j=1}^{n_y} \alpha_j(\boldsymbol{\Delta}_0) \boldsymbol{t}^{y}_j.
\]
Therefore, if $\ell_\mathrm{KL}$ achieves a global minimum, then $\bar{\boldsymbol{t}}^{x}$ must lie in the relative interior of the convex hull $\boldsymbol{C}^{y}$.

Now, we proceed with proving that $\bar{\boldsymbol{t}}^{x} \in \relint(\boldsymbol{C}^{y})$ implies $\ell_\mathrm{KL}$ achieves a global minimum.  Let~$\bar{\boldsymbol{t}}^{x}\in \relint(\boldsymbol{C}^{y})$, in other words, there exist $\alpha_1,\ldots,\alpha_{n_y} > 0$ such that $\sum_{j=1}^{n_y}\alpha_j = 1$ and $\bar{\boldsymbol{t}}^{x} = \sum_{j=1}^{n_y}\alpha_j \boldsymbol{t}^{y}_j$. To ease the notation, define $\boldsymbol{u}_j \coloneqq \boldsymbol{t}^{y}_j - \bar{\boldsymbol{t}}^{x}$, $j = 1, \ldots, n_y$. We can then write
\[
  \sum_{j=1}^{n_y}\alpha_j \boldsymbol{u}_j
= \sum_{j=1}^{n_y}\alpha_j (\boldsymbol{t}^{y}_j - \bar{\boldsymbol{t}}^{x})
= \sum_{j=1}^{n_y}\alpha_j \boldsymbol{t}^{y}_j - \bar{\boldsymbol{t}}^{x} = \boldsymbol{0}
\]
and
\begin{align*}
	\ell_\mathrm{KL}(\boldsymbol{\Delta}) 
	= -\boldsymbol{\Delta}^{\top}\bar{\boldsymbol{t}}^{x} + \log\left( \frac{1}{n_y} \sum_{j=1}^{n_y}\exp(\boldsymbol{\Delta}^{\top} \boldsymbol{t}^{y}_j)\right)
	=\log\left( \frac{1}{n_y} \sum_{j=1}^{n_y}\exp(\boldsymbol{\Delta}^{\top} \boldsymbol{u}_j)\right).
\end{align*}
Furthermore, define $\boldsymbol{U}$ to be the subspace of $\mathbb{R}^k$ spanned by $\boldsymbol{u}_1,\dots,\boldsymbol{u}_{n_y}$:
\begin{equation}\label{U}
    \boldsymbol{U} \coloneqq \mathrm{span}(\{\boldsymbol{u}_1,\ldots,\boldsymbol{u}_{n_y}\}),
\end{equation}
and define $\boldsymbol{U}^\perp$ to be the orthogonal complement of $\boldsymbol{U}$ in $\mathbb{R}^k$.
Since $\mathbb{R}^k=\boldsymbol{U}\oplus\boldsymbol{U}^\perp$, 
we can (uniquely) decompose any $\boldsymbol{\Delta}\in\mathbb{R}^k$ as $\boldsymbol{\Delta}=\boldsymbol{\Delta}_{\boldsymbol{U}} +\boldsymbol{\Delta}_{\boldsymbol{U}^\perp}$ with $\boldsymbol{\Delta}_{\boldsymbol{U}}\in\boldsymbol{U},\boldsymbol{\Delta}_{\boldsymbol{U}^\perp}\in\boldsymbol{U}^\perp$.  Notice that 
\begin{equation}\label{const}
\ell_\mathrm{KL}\left(\boldsymbol{\Delta}_{1} +\boldsymbol{\Delta}_{2}\right)=\ell_\mathrm{KL}\left(\boldsymbol{\Delta}_{1}\right),~~~
\boldsymbol{\Delta}_{1}\in\boldsymbol{U},~
\boldsymbol{\Delta}_{2}\in\boldsymbol{U}^\perp
\end{equation}
is constant for any fixed $\boldsymbol{\Delta}_{1}\in\boldsymbol{U}$. Hence, we may only consider the restriction
\begin{align*}
\ell_\mathrm{KL}\vert_{\boldsymbol{U}}:\;
& \boldsymbol{U}\to\mathbb{R},\\
& \boldsymbol{\Delta} \mapsto \ell_\mathrm{KL}(\boldsymbol{\Delta}).
\end{align*}
Without loss of generality, assume that $\boldsymbol{U}\ne\{\boldsymbol{0}\}$; otherwise, the existence of a global minimum would follow trivially. 
We will show that $\ell_\mathrm{KL}\vert_{\boldsymbol{U}}$ is a coercive function, i.e., for $\boldsymbol{\Delta}\in\boldsymbol{U}$, 
it holds that
\begin{equation}
\lim\limits_{\Vert\boldsymbol{\Delta}\Vert\to\infty}\ell_\mathrm{KL}\left(\boldsymbol{\Delta}\right)=+\infty.
\end{equation}
The next lemma is needed to prove that $\max_{1\le j\le n_y}{\boldsymbol{\Delta}}^{\top}\boldsymbol{u}_{j}$ attains a strictly positive minimum for all $\boldsymbol{\Delta}\in\boldsymbol{U}$ satisfying $\Vert\boldsymbol{\Delta}\Vert=1$, which serves as a key component of proving the coercivity of $\ell_\mathrm{KL}\vert_{\boldsymbol{U}}$.  
\begin{lemma}\label{lem:1}
The function 
\begin{align*}\label{fstar}
f^*:\;
&\mathbb{S}^{d-1}\cap\boldsymbol{U} \to \mathbb{R}, \\
&{\boldsymbol{\Delta}} \mapsto \max_{1\le j\le n_y}{\boldsymbol{\Delta}}^{\top}\boldsymbol{u}_{j}
\end{align*}
is continuous and strictly positive on the compact set $\mathbb{S}^{d-1}\cap\boldsymbol{U}$, 
where $\mathbb{S}^{d-1}:=\left\{\boldsymbol{\Delta}\in\mathbb{R}^k:\Vert\boldsymbol{\Delta}\Vert=1\right\}$ is the unit sphere in $\mathbb{R}^k$.  
\end{lemma}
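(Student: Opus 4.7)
The plan is to dispatch the three claims of the lemma separately: compactness of the domain, continuity of $f^*$, and strict positivity.

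First I would note that $\mathbb{S}^{d-1}\cap\boldsymbol{U}$ is compact because it is the intersection of the compact unit sphere $\mathbb{S}^{d-1}$ with the closed linear subspace $\boldsymbol{U}$, hence closed and bounded in $\mathbb{R}^k$. Continuity of $f^*$ is immediate: each map $\boldsymbol{\Delta}\mapsto \boldsymbol{\Delta}^{\top}\boldsymbol{u}_j$ is linear and therefore continuous, and the pointwise maximum of finitely many continuous functions is continuous.

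The substantive step is strict positivity, which is where the hypothesis $\bar{\boldsymbol{t}}^x\in\relint(\boldsymbol{C}^y)$ enters. Recall from the preceding paragraph that there exist $\alpha_1,\dots,\alpha_{n_y}>0$ summing to one with $\sum_{j=1}^{n_y}\alpha_j \boldsymbol{u}_j=\boldsymbol{0}$. I would argue by contradiction: suppose there exists $\boldsymbol{\Delta}\in\mathbb{S}^{d-1}\cap\boldsymbol{U}$ with $f^*(\boldsymbol{\Delta})\le 0$, i.e., $\boldsymbol{\Delta}^{\top}\boldsymbol{u}_j\le 0$ for every $j=1,\dots,n_y$. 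Taking the inner product of $\boldsymbol{\Delta}$ with the relation $\sum_j \alpha_j \boldsymbol{u}_j=\boldsymbol{0}$ yields
\[
0 \;=\; \boldsymbol{\Delta}^{\top}\!\sum_{j=1}^{n_y}\alpha_j\boldsymbol{u}_j \;=\; \sum_{j=1}^{n_y}\alpha_j\bigl(\boldsymbol{\Delta}^{\top}\boldsymbol{u}_j\bigr).
\]
Since each $\alpha_j>0$ and each summand is nonpositive, every term must vanish, so $\boldsymbol{\Delta}^{\top}\boldsymbol{u}_j=0$ for all $j$. This means $\boldsymbol{\Delta}\in\boldsymbol{U}^\perp$, and combined with the assumption $\boldsymbol{\Delta}\in\boldsymbol{U}$ we conclude $\boldsymbol{\Delta}=\boldsymbol{0}$, contradicting $\Vert\boldsymbol{\Delta}\Vert=1$. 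Therefore $f^*(\boldsymbol{\Delta})>0$ on the entire domain.

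The only conceptual hurdle is the strict positivity argument, and the crux is recognizing that the strict positivity of all weights $\alpha_j$ (which is precisely what the \emph{relative interior} hypothesis provides, as opposed to merely $\bar{\boldsymbol{t}}^x\in\boldsymbol{C}^y$) forces every signed inner product to be zero. Nothing in the remainder involves calculation beyond the one displayed line above.
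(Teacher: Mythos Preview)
Your proof is correct and follows essentially the same approach as the paper: continuity via the maximum of finitely many linear maps, and strict positivity from the identity $\sum_j \alpha_j(\boldsymbol{\Delta}^\top\boldsymbol{u}_j)=0$ with all $\alpha_j>0$, forcing $\boldsymbol{\Delta}\perp\boldsymbol{u}_j$ for all $j$ and hence $\boldsymbol{\Delta}=\boldsymbol{0}$. Your contradiction argument is in fact slightly cleaner than the paper's case analysis (which first finds a nonzero inner product and then splits on its sign), and you additionally spell out the compactness of the domain, which the paper leaves implicit.
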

\begin{proof}[Proof of Lemma~\ref{lem:1}]
As $f^*$ is a maximum of finite continuous functions ${\boldsymbol{\Delta}} \mapsto {\boldsymbol{\Delta}}^{\top}\boldsymbol{u}_{j}$, it is continuous as well. 

To show strict positivity, let ${\boldsymbol{\Delta}}\in\mathbb{S}^{d-1}\cap\boldsymbol{U}$. Since $\sum_{j=1}^{n_y}\alpha_j \boldsymbol{u}_j = \boldsymbol{0}$,
\[
\sum_{j=1}^{n_y}\alpha_j \boldsymbol{\Delta}^{\top}\boldsymbol{u}_j = 0.
\]
As $\boldsymbol{\Delta}$ can be represented by $\sum_{j=1}^{n_y}\beta_j \boldsymbol{u}_j$, $\boldsymbol{\Delta}^{\top}\boldsymbol{u}_j$ cannot all be zero for every $j$; otherwise, $\beta_j=0$ for every $j$, leading to $\boldsymbol{\Delta} = 0$, which is a contradiction. Therefore, there exists some $1\le j_0\le n_y$ such that ${\boldsymbol{\Delta}}^{\top} \boldsymbol{u}_{j_0} \ne 0$. If ${\boldsymbol{\Delta}}^{\top} \boldsymbol{u}_{j_0} > 0$, the claim has been proved.  If ${\boldsymbol{\Delta}}^{\top} \boldsymbol{u}_{j_0} < 0$, then there must exist some~$1\le j_1\le n_y$ such that ${\boldsymbol{\Delta}}^{\top} \boldsymbol{u}_{j_1} > 0$; otherwise, we can deduce that $\sum_{j=1}^{n_y}\alpha_j \boldsymbol{\Delta}^{\top}\boldsymbol{u}_j < 0$ recalling $\alpha_1,\ldots,\alpha_{n_y} > 0$  --- another contradiction.
\end{proof}
Lemma~\ref{lem:1} concludes that $f^*$ attains a strictly positive minimum on $\mathbb{S}^{d-1}\cap\boldsymbol{U}$:
\begin{equation}\label{alpha}
	\delta \coloneqq \min\limits_{{\boldsymbol{\Delta}}\in\mathbb{S}^{d-1}\cap\boldsymbol{U}} f^*({\boldsymbol{\Delta}})>0.
\end{equation}
We write
$
\tilde{\boldsymbol{\Delta}}\coloneqq {\boldsymbol{\Delta}}/{\Vert\boldsymbol{\Delta}\Vert}
$
for $\boldsymbol{\Delta}\in\boldsymbol{U}$ if $\boldsymbol{\Delta}\ne\boldsymbol{0}$. 
Then for $\boldsymbol{\Delta}\in\boldsymbol{U}$, when $\Vert\boldsymbol{\Delta}\Vert\to\infty$,
\begin{align}\label{coercive}
\lim\limits_{\Vert\boldsymbol{\Delta}\Vert\to\infty} \ell_\mathrm{KL}(\boldsymbol{\Delta})
&=\lim\limits_{\Vert\boldsymbol{\Delta}\Vert\to\infty}\log\left( \frac{1}{n_y} \sum_{j=1}^{n_y}\exp(\boldsymbol{\Delta}^{\top} \boldsymbol{u}_j)\right) \notag\\
&=\lim\limits_{\Vert\boldsymbol{\Delta}\Vert\to\infty}\log\left( \frac{1}{n_y} \sum_{j=1}^{n_y}\exp\left(\Vert\boldsymbol{\Delta}\Vert\tilde{\boldsymbol{\Delta}}^{\top} \boldsymbol{u}_j\right)\right) \notag\\
&\ge\lim\limits_{\Vert\boldsymbol{\Delta}\Vert\to\infty}\log\left( \frac{1}{n_y} \max_{1\le j\le n_y}\exp\left(\Vert\boldsymbol{\Delta}\Vert\tilde{\boldsymbol{\Delta}}^{\top} \boldsymbol{u}_{j}\right)\right) \notag\\
&=\lim\limits_{\Vert\boldsymbol{\Delta}\Vert\to\infty}\max_{1\le j\le n_y}\Vert\boldsymbol{\Delta}\Vert\tilde{\boldsymbol{\Delta}}^{\top} \boldsymbol{u}_{j}-\log(n_y) \notag\\
&\ge\lim\limits_{\Vert\boldsymbol{\Delta}\Vert\to\infty}\delta\Vert\boldsymbol{\Delta}\Vert-\log(n_y)=+\infty.
\end{align}
Hence $\ell_\mathrm{KL}\vert_{\boldsymbol{U}}$ is coercive. As a continuous and coercive function, $\ell_\mathrm{KL}\vert_{\boldsymbol{U}}$ attains a global minimum by the following lemma from \citet[page~543, Corollary~C.6]{MR2168305}.

\begin{lemma}\label{lem:coercive}
Let $f:\mathbb{R}^k\mapsto\mathbb{R}$ be a continuous function. Then the
function $f(x)$ has a global minimum if $f$ is coercive. 
\end{lemma}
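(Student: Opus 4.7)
The plan is to reduce the problem to minimization over a compact set, where the extreme value theorem directly applies. First I would fix an anchor point, say $\boldsymbol{0}\in\mathbb{R}^k$, and set $c := f(\boldsymbol{0})$. By the definition of coercivity, $f(\boldsymbol{x})\to +\infty$ as $\Vert\boldsymbol{x}\Vert\to\infty$, so there exists a radius $R>0$ such that $f(\boldsymbol{x})>c$ whenever $\Vert\boldsymbol{x}\Vert>R$. The upshot is that any candidate for a global minimizer must lie in the closed ball $\overline{B(\boldsymbol{0},R)} := \{\boldsymbol{x}\in\mathbb{R}^k:\Vert\boldsymbol{x}\Vert\le R\}$, since the value $f(\boldsymbol{0})=c$ already undercuts every value of $f$ attained outside this ball.

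Next I would apply the extreme value theorem to the restriction $f\vert_{\overline{B(\boldsymbol{0},R)}}$. The ball $\overline{B(\boldsymbol{0},R)}$ is closed and bounded, hence compact in $\mathbb{R}^k$ by the Heine--Borel theorem, and $f$ is continuous by hypothesis. Thus $f\vert_{\overline{B(\boldsymbol{0},R)}}$ attains its minimum at some point $\boldsymbol{x}^\ast\in \overline{B(\boldsymbol{0},R)}$, and this minimum value satisfies $f(\boldsymbol{x}^\ast)\le f(\boldsymbol{0})=c$ since $\boldsymbol{0}$ is itself in the ball.

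Finally I would verify that $\boldsymbol{x}^\ast$ is a global minimizer of $f$ on all of $\mathbb{R}^k$ by a short case split on $\Vert\boldsymbol{x}\Vert$. For any $\boldsymbol{x}$ with $\Vert\boldsymbol{x}\Vert\le R$, the inequality $f(\boldsymbol{x}^\ast)\le f(\boldsymbol{x})$ holds directly from the choice of $\boldsymbol{x}^\ast$ as the minimizer over the ball. For any $\boldsymbol{x}$ with $\Vert\boldsymbol{x}\Vert>R$, we have $f(\boldsymbol{x})>c\ge f(\boldsymbol{x}^\ast)$ by our choice of $R$. The argument has no real obstacle; the conceptual content is simply the observation that coercivity is exactly the hypothesis needed to force all sublevel sets $\{f\le c\}$ to be bounded, thereby reducing a global optimization problem on $\mathbb{R}^k$ to a compact one.
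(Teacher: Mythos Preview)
Your argument is correct and is the standard proof of this fact. Note, however, that the paper does not actually prove this lemma: it is stated and immediately attributed to \citet[page~543, Corollary~C.6]{MR2168305}, so there is no in-paper proof to compare against. Your write-up supplies exactly the elementary argument one would expect behind that citation---coercivity forces the sublevel set $\{f\le f(\boldsymbol{0})\}$ to be bounded, hence contained in a compact ball, and the extreme value theorem does the rest.
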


Finally, using Equation~\eqref{const} we conclude that $\ell_\mathrm{KL}$ attains a global minimum.

\bigskip

Next, we prove assertion (iii).  First, we show that $\bar{\boldsymbol{t}}^{x} \not\in \boldsymbol{C}^{y}$ if $\ell_\mathrm{KL}$ is unbounded from below.  We prove it by contradiction.  Let $\bar{\boldsymbol{t}}^{x} \in \boldsymbol{C}^{y}$. There exist $\alpha_1,\ldots,\alpha_{n_y} \ge 0$ such that $\sum_{j=1}^{n_y}\alpha_j = 1$ and $\bar{\boldsymbol{t}}^{x}=\sum\limits_{j=1}^{n_y}\alpha_j \boldsymbol{t}^{y}_j$. 
For any $\boldsymbol{\Delta}\in\mathbb{R}^k$, consider a $j^*$ such that $\boldsymbol{\Delta}^{\top} \boldsymbol{t}^{y}_{j^*}=\max\limits_{1\le j\le n_y}\boldsymbol{\Delta}^{\top} \boldsymbol{t}^{y}_j$.  We deduce
\begin{align*}
	\ell_\mathrm{KL}(\boldsymbol{\Delta}) 
	&= -\boldsymbol{\Delta}^{\top} \bar{\boldsymbol{t}}^{x} + \log\left( \frac{1}{n_y} \sum_{j=1}^{n_y}\exp(\boldsymbol{\Delta}^{\top} \boldsymbol{t}^{y}_j)\right)\\
	&= -\boldsymbol{\Delta}^{\top} \sum\limits_{j=1}^{n_y}\alpha_j \boldsymbol{t}^{y}_j+\log\left( \frac{1}{n_y} \sum_{j=1}^{n_y}\exp(\boldsymbol{\Delta}^{\top} \boldsymbol{t}^{y}_j)\right) \\
	&\ge -\boldsymbol{\Delta}^{\top} \boldsymbol{t}^{y}_{j^*}+\log\left( \sum_{j=1}^{n_y}\exp(\boldsymbol{\Delta}^{\top} \boldsymbol{t}^{y}_j)\right)-\log(n_y)\\
	&\ge -\boldsymbol{\Delta}^{\top} \boldsymbol{t}^{y}_{j^*}+\log\left( \exp(\boldsymbol{\Delta}^{\top} \boldsymbol{t}^{y}_{j^*})\right)-\log(n_y)\\
	&= -\log(n_y),
\end{align*}
which yields a contradiction.  Hence, $\bar{\boldsymbol{t}}^{x} \not\in \boldsymbol{C}^{y}$.

Then we show that $\bar{\boldsymbol{t}}^{x} \not\in \boldsymbol{C}^{y}$ implies that $\ell_\mathrm{KL}$ is unbounded from below. Let $\bar{\boldsymbol{t}}^{x} \notin \boldsymbol{C}^{y}$. Since $\boldsymbol{C}^{y}$ and the singleton $\{\bar{\boldsymbol{t}}^{x}\}$ are disjoint compact sets, they are strongly separable, i.e., there exist some~$\boldsymbol{\Delta}^*\in\mathbb{R}^k$ and $\beta_1,\beta_2\in\mathbb{R}$ such that for all $\boldsymbol{t}\in\boldsymbol{C}^{y}$,
\begin{equation*}
	\boldsymbol{\Delta}^{*\top} \boldsymbol{t}
	\le \beta_1 < \beta_2 \le 
	\boldsymbol{\Delta}^{*\top} \bar{\boldsymbol{t}}^{x}.
\end{equation*}
This yields
\begin{align*}
	  \lim\limits_{a\to\infty}\ell_\mathrm{KL}(a\boldsymbol{\Delta}^*)
	&=\lim\limits_{a\to\infty}\left\{-a\boldsymbol{\Delta}^{*\top} \bar{\boldsymbol{t}}^{x}
	  +\log\left( \frac{1}{n_y} \sum_{j=1}^{n_y}\exp(a\boldsymbol{\Delta}^{*\top}  \boldsymbol{t}^{y}_j)\right)\right\}\\
	&\le\lim\limits_{a\to\infty}\left\{-a\boldsymbol{\Delta}^{*\top} \bar{\boldsymbol{t}}^{x}
	 +\max_{1\le j\le n_y} a\boldsymbol{\Delta}^{*\top} \boldsymbol{t}^{y}_{j}\right\}
	 \le\lim\limits_{a\to\infty}\left\{-a\beta_2+a\beta_1\right\}=-\infty,
\end{align*}
which shows that $\ell_\mathrm{KL}$ is unbounded from below.

\bigskip

It remains to argue assertion (ii). Since $\bar{\boldsymbol{t}}^x \in \relbd(\boldsymbol{C}^{y})$ is equivalent to $\bar{\boldsymbol{t}}^x \in \boldsymbol{C}^{y}\backslash \relint(\boldsymbol{C}^{y})$, the assertion follows immediately. 
\end{proof}

\subsection{Regularized KLIEP estimators}

The minimizer of $\ell_\mathrm{KL}(\boldsymbol{\Delta})$ serves as one natural estimator of $\Delta$. However, as proven in Theorem~\ref{theorem:1}, such a minimizer does not always exist, rendering the minimization of the empirical KLIEP loss an ill-posed problem. Furthermore, in the high-dimensional case ($k \gg n_y$), even if a minimizer exists, it may heavily overfit the data. Hence, we will explore some regularized estimators, with a focus on (single) norm-based penalties in this section.

\begin{theorem}\label{theorem:2}
Given two samples $\boldsymbol{X}=(\boldsymbol{X}_1,\dots,\boldsymbol{X}_{n_x})$ and $\boldsymbol{Y}=(\boldsymbol{Y}_1,\dots,\boldsymbol{Y}_{n_y})$,
let $\bar{\boldsymbol{t}}^x$ and $\boldsymbol{C}^{y}$ be defined by Equations~\eqref{eq:avetx} and \eqref{eq:Py}, respectively.  
Fix a norm $\Vert\cdot\Vert_{\ddagger}$ on $\mathbb{R}^k$ and $\lambda\ge0$. The empirical KLIEP loss with a penalty given~by
\begin{align*}
    \ell_{\mathrm{KL};\lambda,\Vert\cdot\Vert_{\ddagger}}(\boldsymbol{\Delta}) 
    \coloneqq\;& \ell_\mathrm{KL}(\boldsymbol{\Delta}) + \lambda\Vert\boldsymbol{\Delta}\Vert_{\ddagger} \\
     =\;& -\boldsymbol{\Delta}^{\top}  \bar{\boldsymbol{t}}^{x} + \log\left(\frac{1}{n_y}\sum_{j=1}^{n_y} \exp\{\boldsymbol{\Delta}^{\top}  \boldsymbol{t}^{y}_j\}\right) +\lambda\Vert\boldsymbol{\Delta}\Vert_{\ddagger}
\end{align*}
satisfies the following:
\begin{enumerate}[itemsep=-.25ex, label=(\roman*)]
	\item if $\bar{\boldsymbol{t}}^{x} \in \relint(\boldsymbol{C}^{y})$, then $\ell_{\mathrm{KL},\lambda,\Vert\cdot\Vert_{\ddagger}}$ attains a global minimum for all $\lambda\ge 0$;
	\item if $\bar{\boldsymbol{t}}^{x} \in \relbd(\boldsymbol{C}^{y})$, then $\ell_{\mathrm{KL},\lambda,\Vert\cdot\Vert_{\ddagger}}$ attains a global minimum for all $\lambda> 0$. If $\lambda=0$, then $\ell_{\mathrm{KL},\lambda,\Vert\cdot\Vert_{\ddagger}}$ is bounded from below, but does not attain a global minimum;
    \item if $\bar{\boldsymbol{t}}^{x} \notin \boldsymbol{C}^{y}$, then
    \begin{enumerate}[itemsep=-.25ex, label=(\alph*)]
    \item if $\lambda> \lambda^{\#}$, then $\ell_{\mathrm{KL},\lambda,\Vert\cdot\Vert_{\ddagger}}$ attains a global minimum;
    \item if $\lambda=\lambda^{\#}$, then $\ell_{\mathrm{KL},\lambda,\Vert\cdot\Vert_{\ddagger}}$ is bounded from below;
    \item if $\lambda<\lambda^{\#}$, then $\ell_{\mathrm{KL},\lambda,\Vert\cdot\Vert_{\ddagger}}$ is unbounded from below.
    \end{enumerate}
Here, $\lambda^{\#}$ is given by
 \begin{equation}\label{lambdastar}
	    \lambda^{\#}\coloneqq \min\limits_{\boldsymbol{t}\in\boldsymbol{C}^{y}}\Vert \bar{\boldsymbol{t}}^{x}-\boldsymbol{t}\Vert_{\#}
	\end{equation} where $\Vert\cdot\Vert_{\#}$ is the norm dual to $\Vert\cdot\Vert_{\ddagger}$ and $\boldsymbol{C}^{y}=\conv(\{\boldsymbol{t}^{y}_1,\ldots,\boldsymbol{t}^{y}_{n_y}\})$ as in Theorem \ref{theorem:1}, i.e., $\lambda^{\#}$ is the $\Vert\cdot\Vert_{\#}$-norm distance of $\bar{\boldsymbol{t}}^{x}$ to the polytope spanned by the $\boldsymbol{t}^{y}_1,\ldots,\boldsymbol{t}^{y}_{n_y}$.
\end{enumerate}
\end{theorem}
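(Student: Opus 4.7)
My approach is to study the leading-order behavior of $\ell_{\mathrm{KL};\lambda,\Vert\cdot\Vert_{\ddagger}}$ along rays $\boldsymbol{\Delta}=a\boldsymbol{d}$ and to exploit the two elementary envelopes of the log-sum-exp. Let $M(\boldsymbol{\Delta}):=\max_{1\le j\le n_y}\boldsymbol{\Delta}^{\top}\boldsymbol{t}^{y}_j=\max_{\boldsymbol{t}\in\boldsymbol{C}^{y}}\boldsymbol{\Delta}^{\top}\boldsymbol{t}$ be the support function of $\boldsymbol{C}^{y}$, which is $1$-homogeneous and continuous. Replacing the sum by its largest summand yields the pointwise lower bound
\begin{equation*}
\ell_{\mathrm{KL};\lambda,\Vert\cdot\Vert_{\ddagger}}(\boldsymbol{\Delta}) \ge M(\boldsymbol{\Delta}) - \boldsymbol{\Delta}^{\top}\bar{\boldsymbol{t}}^{x} + \lambda\Vert\boldsymbol{\Delta}\Vert_{\ddagger} - \log n_y,
\end{equation*}
while the matching upper bound $\log\bigl((1/n_y)\sum_j\exp(\boldsymbol{\Delta}^{\top}\boldsymbol{t}^{y}_j)\bigr)\le M(\boldsymbol{\Delta})$ is what I will use to exhibit diverging rays.

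For items (i) and (ii), I would dispatch the case $\lambda=0$ by noting that the penalized loss collapses to $\ell_{\mathrm{KL}}$, so the assertions follow directly from Theorem~\ref{theorem:1}. When $\lambda>0$, Theorem~\ref{theorem:1} already guarantees that $\ell_{\mathrm{KL}}$ is bounded from below in both cases, while $\lambda\Vert\boldsymbol{\Delta}\Vert_{\ddagger}\to+\infty$ as $\Vert\boldsymbol{\Delta}\Vert\to\infty$ by equivalence of norms on $\mathbb{R}^k$. Hence the regularized loss is continuous and coercive on $\mathbb{R}^k$, and Lemma~\ref{lem:coercive} supplies a global minimizer.

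Item (iii) is the substantive part, and the central identity I need is the dual representation
\begin{equation*}
\lambda^{\#} = \max_{\Vert\boldsymbol{d}\Vert_{\ddagger}\le 1}\bigl[\boldsymbol{d}^{\top}\bar{\boldsymbol{t}}^{x} - M(\boldsymbol{d})\bigr].
\end{equation*}
I would derive this by writing $\Vert\bar{\boldsymbol{t}}^{x}-\boldsymbol{t}\Vert_{\#}=\max_{\Vert\boldsymbol{d}\Vert_{\ddagger}\le 1}\boldsymbol{d}^{\top}(\bar{\boldsymbol{t}}^{x}-\boldsymbol{t})$ and swapping $\min_{\boldsymbol{t}\in\boldsymbol{C}^{y}}$ with $\max_{\Vert\boldsymbol{d}\Vert_{\ddagger}\le 1}$ via Sion's minimax theorem, which applies because both sets are compact and convex and the function $(\boldsymbol{d},\boldsymbol{t})\mapsto\boldsymbol{d}^{\top}(\bar{\boldsymbol{t}}^{x}-\boldsymbol{t})$ is bilinear. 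With this identity, the three subcases fall out as follows. For $\lambda=\lambda^{\#}$, the dual representation together with $1$-homogeneity gives $M(\boldsymbol{\Delta})-\boldsymbol{\Delta}^{\top}\bar{\boldsymbol{t}}^{x}+\lambda^{\#}\Vert\boldsymbol{\Delta}\Vert_{\ddagger}\ge 0$ for every $\boldsymbol{\Delta}$, so the lower envelope yields $\ell_{\mathrm{KL};\lambda^{\#},\Vert\cdot\Vert_{\ddagger}}\ge -\log n_y$. For $\lambda<\lambda^{\#}$, a maximizer $\boldsymbol{d}^*$ of the dual (on the unit sphere $\Vert\boldsymbol{d}^*\Vert_{\ddagger}=1$, by $1$-homogeneity) combined with the upper envelope gives $\ell_{\mathrm{KL};\lambda,\Vert\cdot\Vert_{\ddagger}}(a\boldsymbol{d}^*)\le a(\lambda-\lambda^{\#})\to -\infty$ as $a\to\infty$. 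For $\lambda>\lambda^{\#}$, the dual representation upgrades to the coercive bound
\begin{equation*}
M(\boldsymbol{\Delta})-\boldsymbol{\Delta}^{\top}\bar{\boldsymbol{t}}^{x}+\lambda\Vert\boldsymbol{\Delta}\Vert_{\ddagger} \ge (\lambda-\lambda^{\#})\Vert\boldsymbol{\Delta}\Vert_{\ddagger},
\end{equation*}
which, by norm equivalence on $\mathbb{R}^k$, dominates a positive multiple of $\Vert\boldsymbol{\Delta}\Vert$. Feeding this into the lower envelope makes the regularized loss coercive, and Lemma~\ref{lem:coercive} again produces a global minimum.

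The main obstacle I anticipate is the minimax identification of $\lambda^{\#}$ with its dual supremum; Sion's theorem applies cleanly here, but a direct alternative uses the separating hyperplane between $\bar{\boldsymbol{t}}^{x}$ and the compact convex set $\boldsymbol{C}^{y}$. A smaller technical point to watch is in case (iii)(a), where converting the strict inequality $\lambda>\lambda^{\#}$ into genuine coercivity (not just positivity on $\boldsymbol{\Delta}\ne\boldsymbol{0}$) relies on finite-dimensional equivalence of $\Vert\cdot\Vert_{\ddagger}$ with the Euclidean norm implicit in Lemma~\ref{lem:coercive}.
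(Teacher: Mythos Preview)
Your proposal is correct and follows essentially the same route as the paper. Both arguments dispatch (i) and (ii) by falling back on Theorem~\ref{theorem:1} plus a coercivity observation for $\lambda>0$, and both handle (iii) by first rewriting $\lambda^{\#}$ via a minimax theorem as $\max_{\Vert\boldsymbol{d}\Vert_{\ddagger}=1}\bigl[\boldsymbol{d}^{\top}\bar{\boldsymbol{t}}^{x}-M(\boldsymbol{d})\bigr]$, then combining this with the two log-sum-exp envelopes $M(\boldsymbol{\Delta})-\log n_y\le \log\bigl(\tfrac{1}{n_y}\sum_j e^{\boldsymbol{\Delta}^{\top}\boldsymbol{t}^{y}_j}\bigr)\le M(\boldsymbol{\Delta})$ to get, respectively, the coercive lower bound for (a), the finite lower bound for (b), and a diverging ray for (c). Your packaging in terms of the support function $M$ and the explicit ``bounded below $+$ coercive penalty'' argument for (i)--(ii) is a touch cleaner than the paper's, but the mathematical content is the same.
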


\begin{proof}[Proof of Theorem~\ref{theorem:2}]
In view of Theorem \ref{theorem:1}, assertion (i) follows immediately.  The second part of assertion (ii) is also obvious, while the first part follows from Lemma~\ref{lem:coercive} easily.

It remains to show assertion (iii). First, we give an alternative representation for $\lambda^{\#}$.
Denote $\bar{\mathbb{B}}^k_{\Vert\cdot\Vert_{\ddagger}}=\{x\in\mathbb{R}^k:\Vert x\Vert_{\ddagger}\le1\}$ the closure of the unit ball with respect to $\Vert\cdot\Vert_{\ddagger}$.  
We find
\begin{align*}
        \lambda^{\#}
         =\min\limits_{\boldsymbol{t}\in\boldsymbol{C}^{y}}\Vert \bar{\boldsymbol{t}}^{x}-\boldsymbol{t}\Vert_{\#}
        &=\min\limits_{\boldsymbol{t}\in\boldsymbol{C}^{y}}\max\limits_{{\boldsymbol{\Delta}}\in \bar{\mathbb{B}}^d_{\Vert\cdot\Vert_{\ddagger}}}{\boldsymbol{\Delta}}^{\top} (\bar{\boldsymbol{t}}^{x} - \boldsymbol{t})\\
        &=\max\limits_{{\boldsymbol{\Delta}}\in \bar{\mathbb{B}}^d_{\Vert\cdot\Vert_{\ddagger}}}\min\limits_{\boldsymbol{t}\in\boldsymbol{C}^{y}}{\boldsymbol{\Delta}}^{\top} (\bar{\boldsymbol{t}}^{x} - \boldsymbol{t})
         =\max\limits_{{\boldsymbol{\Delta}}\in \mathbb{S}^{d-1}_{\Vert\cdot\Vert_{\ddagger}}}\min\limits_{\boldsymbol{t}\in\boldsymbol{C}^{y}}{\boldsymbol{\Delta}}^{\top} (\bar{\boldsymbol{t}}^{x} - \boldsymbol{t}).
\end{align*}
Here, we use the Minimax Theorem \citep[Propistion~5.4.4]{MR3444832} as stated below.  
\begin{lemma}[Minimax Theorem]\label{minimax}
    Let $X\subseteq\mathbb{R}^n$ and $Y\subseteq\mathbb{R}^m$ be compact convex sets. Let $f:X\times Y\to\mathbb{R}$ a bilinear form. Then we have\begin{equation}
        \max\limits_{x\in X}\min\limits_{y\in Y}f(x,y)=\min\limits_{y\in Y}\max\limits_{x\in X}f(x,y).
    \end{equation}
\end{lemma}
If a linear function attains a minimum over the polytope set $\boldsymbol{C}^{y}$, which has at least one extreme point, then the function must attain a minimum at some extreme point of $\boldsymbol{C}^{y}$ \citep[page~701]{MR3444832}. Therefore, we further deduce
\begin{equation}
\lambda^{\#}
         =\max\limits_{{\boldsymbol{\Delta}}\in \mathbb{S}^{d-1}_{\Vert\cdot\Vert_{\ddagger}}}\min\limits_{1\le j\le n_y}{\boldsymbol{\Delta}}^{\top} (\bar{\boldsymbol{t}}^{x} - \boldsymbol{t}^{y}_j)
         =-\min\limits_{{\boldsymbol{\Delta}}\in\mathbb{S}^{d-1}_{\Vert\cdot\Vert_{\ddagger}}}\max\limits_{1\le j\le n_y}\boldsymbol{{\Delta}}^{\top} \boldsymbol{u}_j.
         \label{lambdastar2}
\end{equation}
The obtained representation for $\lambda^{\#}$ enables us to show the claim. For all $\boldsymbol{\Delta}\ne0$, writing
$
\tilde{\boldsymbol{\Delta}}\coloneqq {\boldsymbol{\Delta}}/{\Vert\boldsymbol{\Delta}\Vert}
$,
we have
\begin{align}
    \ell_\mathrm{KL}(\boldsymbol{\Delta})&=\log\left( \frac{1}{n_y} \sum_{j=1}^{n_y}\exp(\boldsymbol{\Delta}^{\top}  \boldsymbol{u}_j)\right) \notag\\
    & = \log\left( \frac{1}{n_y} \sum_{j=1}^{n_y}\exp(\Vert\boldsymbol{\Delta}\Vert_{\ddagger}\tilde{\boldsymbol{\Delta}}^{\top}  \boldsymbol{u}_j)\right) \notag\\
    &\ge \log\left( \frac{1}{n_y} \max_{1\le j\le n_y}\exp(\Vert\boldsymbol{\Delta}\Vert_{\ddagger}\tilde{\boldsymbol{\Delta}}^{\top} \boldsymbol{u}_{j})\right) \notag\\
    & = \max_{1\le j\le n_y}\Vert\boldsymbol{\Delta}\Vert_{\ddagger}\tilde{\boldsymbol{\Delta}}^{\top} \boldsymbol{u}_{j} - \log(n_y) \notag\\
    &\ge-\lambda^{\#}\Vert\boldsymbol{\Delta}\Vert_{\ddagger} - \log(n_y).
    \label{lowerbound}
\end{align}
We then have the following results:
\begin{enumerate}[itemsep=-.25ex, label=(\alph*)]
\item If $\lambda>\lambda^{\#}$, we may add the penalty term $\lambda\Vert\boldsymbol{\Delta}\Vert_{\ddagger}$ to both sides of the bound in \eqref{lowerbound} to obtain that for all $\boldsymbol{\Delta}\in\boldsymbol{U}$,
\begin{equation}
    \ell_{\mathrm{KL};\lambda,\Vert\cdot\Vert_{\ddagger}}(\boldsymbol{\Delta})\ge (\lambda-\lambda^{\#})\Vert\boldsymbol{\Delta}\Vert_{\ddagger}-\log(n_y),
\label{lowerbound2}
\end{equation}
which yields that $\ell_{\mathrm{KL};\lambda,\Vert\cdot\Vert_{\ddagger}}$ is continuous and coercive and therefore attains a global minimum.   
\item If $\lambda=\lambda^{\#}$, by~\eqref{lowerbound2}, we see that $\ell_{\mathrm{KL};\lambda,\Vert\cdot\Vert_{\ddagger}}$ is lower bounded on $\boldsymbol{U}$ and hence on~$\mathbb{R}^k$.
\item If $\lambda<\lambda^{\#}$, then fix some $\boldsymbol{\Delta}^*\in\mathbb{S}^{d-1}_{\Vert\cdot\Vert_{\ddagger}}$ which attains the minimum in \eqref{lambdastar2}. We have
\begin{align*}
    \lim\limits_{a\to\infty}\ell_{\mathrm{KL};\lambda,\Vert\cdot\Vert_{\ddagger}}(a\boldsymbol{\Delta}^*)
    &=\lim\limits_{a\to\infty}\left\{\log\left( \frac{1}{n_y} \sum_{j=1}^{n_y}\exp(a\boldsymbol{\Delta}^{*\top}  \boldsymbol{u}_j)\right) + \lambda\Vert a\boldsymbol{\Delta}^*\Vert_{\ddagger}\right\} \\
    &\le\lim\limits_{a\to\infty}\left\{\log\left( \max_{1\le j\le n_y}\exp(a\boldsymbol{\Delta}^{*\top}  \boldsymbol{u}_j)\right) + a\lambda\right\} \\
    &=\lim\limits_{a\to\infty}\left\{\max_{1\le j\le n_y} a\boldsymbol{\Delta}^{*\top}  \boldsymbol{u}_j + a\lambda\right\} \\
    &=\lim\limits_{a\to\infty}\left\{a(\lambda-\lambda^{\#})\right\} =-\infty,
\end{align*}
and thus $\ell_{\mathrm{KL};\lambda,\Vert\cdot\Vert_{\ddagger}}$ is unbounded from below.
\end{enumerate}
This completes the proof.
\end{proof}

\begin{ex}[Dual Norms]
For $p\in [1,\infty]$, the $\ell_p$-norm of vector $\boldsymbol{x}\in\mathbb{R}^k$ is
$\|\boldsymbol{x} \|_{p}~:=~\left(\sum _{i=1}^{k}\left|x_{i}\right|^{p}\right)^{1/p}$.
     For $q\in[1,\infty]$ with $\frac{1}{p}+\frac{1}{q}=1$, the norm dual to $\ell_p$ is $\ell_q$ \citep[page~331]{MR2978290}. For example, this implies that the norm dual to $\ell_1$ is $\ell_\infty$, and $\ell_2$ has itself as the dual norm. 
\end{ex}

\section{Differential network analysis}
\label{sec:diffnets}

We now explore the existence issues in KLIEP estimation in the context of differential network analysis and, specifically, inferring differences between graphical models in a Gaussian case.

\subsection{Pairwise interactions and Gaussian graphical models} 
 
Consider the following family of pairwise interaction models on $\mathbb{R}^m$, which is a special case of the exponential family with densities of the form \eqref{eq:expfam}:
\begin{equation}\label{pairwise}
    f(\boldsymbol{x};\boldsymbol{\theta}) = \frac{1}{Z(\boldsymbol{\theta})} 
    \exp\left(
    \sum_{u,v=1, v \ge u}^{m} \theta_{uv} t_{uv}(x_u,x_v)\right), \quad \boldsymbol{x}\in\mathbb{R}^m.
\end{equation}
Here, the natural parameter vector is $\boldsymbol{\theta}=(\theta_{uv})_{u\le v}$, and the sufficient statistic $\mathbf{t}$ has coordinates $t_{uv}(x_u,x_v)$ that depend only on pairs of coordinates of $\boldsymbol{x}$.  
Such pairwise interaction models were studied in \citet{MR3662445}, with Gaussian graphical models and Ising models serving as the two most prominent examples.  These two cases have different dominating measures but share the form of the statistics with $\textbf{t}_{uv}(x_u,x_v)$.  We, thus, present simulation results focused on Gaussian models.

Gaussian graphical models are based on multivariate normal  distributions with density
\[
f(\boldsymbol{x};\boldsymbol{\Theta}) = 
    \frac{\mathrm{det}(\boldsymbol{\Theta})^{1/2}}{(2\pi)^{m/2}}
    \exp\left(-\frac12\boldsymbol{x}^{\top}\boldsymbol{\Theta}\boldsymbol{x}\right), \quad \boldsymbol{x}\in\mathbb{R}^m,
\]
where $\boldsymbol{\Theta}\in\mathbb{R}^{m\times m}$ is the precision matrix and $\mathrm{det}(\cdot)$ denotes the determinant.
The model is an instance of a pairwise interaction model as specified in~\eqref{pairwise} with $\boldsymbol{\theta}$ being the half-vectorization of~$\boldsymbol{\Theta}$ and the sufficient statistic $\mathbf{t}$ having components
\begin{align*}
t_{uv}(x_u,x_v) &= \begin{cases} -x_u^2/2, & u=v, \\ -x_ux_v, & u\neq v,\end{cases}
\end{align*}
Sparsity patterns in the precision matrix may be encoded in the (undirected) concentration graph $G = (V, E)$, which has vertex set $V=\{1,\dots,m\}$ and edge $(i,j)\in E$ if and only if $\theta_{ij}\not=0$.

\subsection{Simulation study}
\label{subsec:sims}

For our simulations, we take up two often considered structures for graphical models, namely, lattice and random graphs.  A lattice graph has its nodes structured in the form of a two-dimensional grid, which here takes a square as shown in Figure~\ref{fig:lattice_ex}. 
The focus of our study is the relationship between
the threshold $\lambda^{\#}$ defined in Equation~\eqref{lambdastar}, in particular, the~$\ell_\infty$ distance of $\bar{\boldsymbol{t}}^{x}$ to the polytope $\boldsymbol{C}^{y}$ in the case when the regularized KLIEP loss $\ell_{\mathrm{KL},\lambda,\Vert\cdot\Vert_{1}}$ is concerned, and
the regularization parameter $\lambda_{\rm Liu}=2.5\sqrt{\frac{\log m}{n_p}}$ suggested by \citet{MR3662445}. 
When $\lambda_{\rm Liu}$ is smaller than $\lambda^{\#}$, the global minimum of $\ell_{\mathrm{KL},\lambda,\|\cdot\|_{1}}$ does not exist and theoretical guarantees from \citet{MR3662445} become vacuous.

\paragraph{Lattice graph.}

In our simulations, the samples $\boldsymbol{X}=(\boldsymbol{X}_1,\dots,\boldsymbol{X}_{n_x})$ and $\boldsymbol{Y}=(\boldsymbol{Y}_1,\dots,\boldsymbol{Y}_{n_y})$ are independent and drawn i.i.d.~from Gaussian distributions $P$ and $Q$, respectively.  Their densities are parametrized as
\begin{align}
\label{eq:p}
p(\boldsymbol{x})  &\propto \exp\left( -\frac12\left\{\sum_{u=1}^{m}\theta_0 x_u^2 + \sum_{(u,v)\in ED_p}\theta_1 x_u x_v \right\}\right), \\
\label{eq:q}
q(\boldsymbol{x})  &\propto \exp\left( -\frac12\left\{\sum_{u=1}^{m}\theta_0 x_u^2 + \sum_{(u,v)\in ED_q\backslash ED_{\rm rdm}}\theta_1 x_u x_v + \sum_{(u,v)\in ED_{\rm rdm}}\theta_1^* x_u x_v \right\}\right),
\end{align}
respectively. In~\eqref{eq:p} and \eqref{eq:q},
$ED_p=ED_q$ are the edge sets of a lattice graph with $m$ nodes, and $ED_{\rm rdm}$ denotes a set of $d$ randomly picked edges from $ED_q$.

\begin{figure}[t]
\centering
\resizebox{2in}{2in}{
\begin{tikzpicture}[-,>=stealth',shorten >=1pt,auto,node distance=2cm,semithick]
  \tikzstyle{every state}=[fill=gray!30,draw=black,text=black]

  \node[state,minimum size=1cm]                      (A)                     {$A_{1}$};
  \node[align=center,state,minimum size=1cm]         (B) [right of=A]        {$A_{2}$};
  \node[align=center,state,minimum size=1cm]         (C) [right of=B]        {$A_{3}$};
  \node[align=center,state,minimum size=1cm]         (D) [right of=C]        {$A_{4}$};

  \node[align=center,state,minimum size=1cm]         (F) [below of=A]        {$A_{5}$};
  \node[align=center,state,minimum size=1cm]         (G) [below of=B]        {$A_{6}$};
  \node[align=center,state,minimum size=1cm]         (H) [below of=C]        {$A_{7}$};
  \node[align=center,state,minimum size=1cm]         (I) [below of=D]        {$A_{8}$};

  \node[align=center,state,minimum size=1cm]         (K) [below of=F]        {$A_{9}$};
  \node[align=center,state,minimum size=1cm]         (L) [below of=G]        {$A_{10}$};
  \node[align=center,state,minimum size=1cm]         (M) [below of=H]        {$A_{11}$};
  \node[align=center,state,minimum size=1cm]         (N) [below of=I]        {$A_{12}$};

  \node[align=center,state,minimum size=1cm]         (P) [below of=K]        {$A_{13}$};
  \node[align=center,state,minimum size=1cm]         (Q) [below of=L]        {$A_{14}$};
  \node[align=center,state,minimum size=1cm]         (R) [below of=M]        {$A_{15}$};
  \node[align=center,state,minimum size=1cm]         (S) [below of=N]        {$A_{16}$};

  \draw (A) -- (B)   (B) -- (C)   (C) -- (D)
        (F) -- (G)   (G) -- (H)   (H) -- (I)
        (K) -- (L)   (L) -- (M)   (M) -- (N)
        (P) -- (Q)   (Q) -- (R)   (R) -- (S)

        (A) -- (F)   (F) -- (K)   (K) -- (P)
        (B) -- (G)   (G) -- (L)   (L) -- (Q)
        (C) -- (H)   (H) -- (M)   (M) -- (R)
        (D) -- (I)   (I) -- (N)   (N) -- (S);

\end{tikzpicture}
}
\caption{Example of a lattice graph when $m=4^2$.}\label{fig:lattice_ex}
\end{figure}
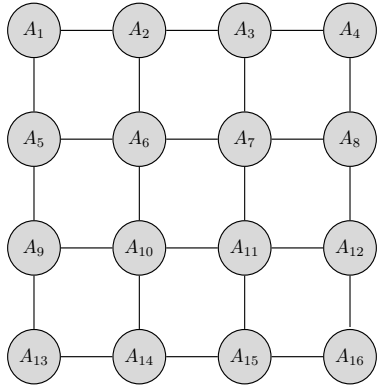

The plots in Figure~\ref{figure:lattice_c4} report the threshold $\lambda^{\#}$ based on $100$ simulations for sample sizes $n_q=1000$ (fixed) and $n_p/\log m\in\{50,100,150,200\}$, in dimensions $m\in\{12^2, 16^2, 20^2\}$, when the number of changed edges $d\in\{10,20,40,80\}$, for parameter values $\theta_0=2$, $\theta_1=0.4$, $\theta_1^*=-0.4$.
For comparison, the regularization parameter $\lambda_{\rm Liu}=2.5\sqrt{\frac{\log m}{n_p}}$ suggested by \citet{MR3662445} is shown as a horizontal red line, and the proportion of $\lambda^{\#}$ larger than $\lambda_{\rm Liu}$ is reported in the top margin of each plot. 
Figure~\ref{figure:lattice_c45} shows the results changing the parameter values to $\theta_0=2$, $\theta_1=0.4$, $\theta_1^*=-0.8$.

As can be expected, Figure~\ref{figure:lattice_c4} shows an increase in $\lambda^{\#}$ as the number of changed edges $d$ increases, keeping all other variables fixed.  This figure echoes the simulations performed in \citet{MR3662445}, where the performance of the regularization parameter $\lambda_{\rm Liu}=2.5\sqrt{\frac{\log m}{n_p}}$ in change detection is satisfying. Indeed, in this case, $\lambda_{\rm Liu}$ is not lower bounded by $\lambda^{\#}$ only when $d=80$ and $n_p=200 \log m$.

Comparing Figures~\ref{figure:lattice_c4} and~\ref{figure:lattice_c45}, we observe that the increased magnitude of the difference parameter $\theta_1^*$ leads to an increase in $\lambda^{\#}$. In Figure~\ref{figure:lattice_c45}, even for $d=20$, there is a non-negligible probability that $\lambda_{\rm Liu}$ is smaller than $\lambda^{\#}$. In this case, one would have to find a larger regularization parameter to infer differences between two Gaussian graphical models.

\begin{figure}
\centering
\includegraphics[width=\textwidth]{~./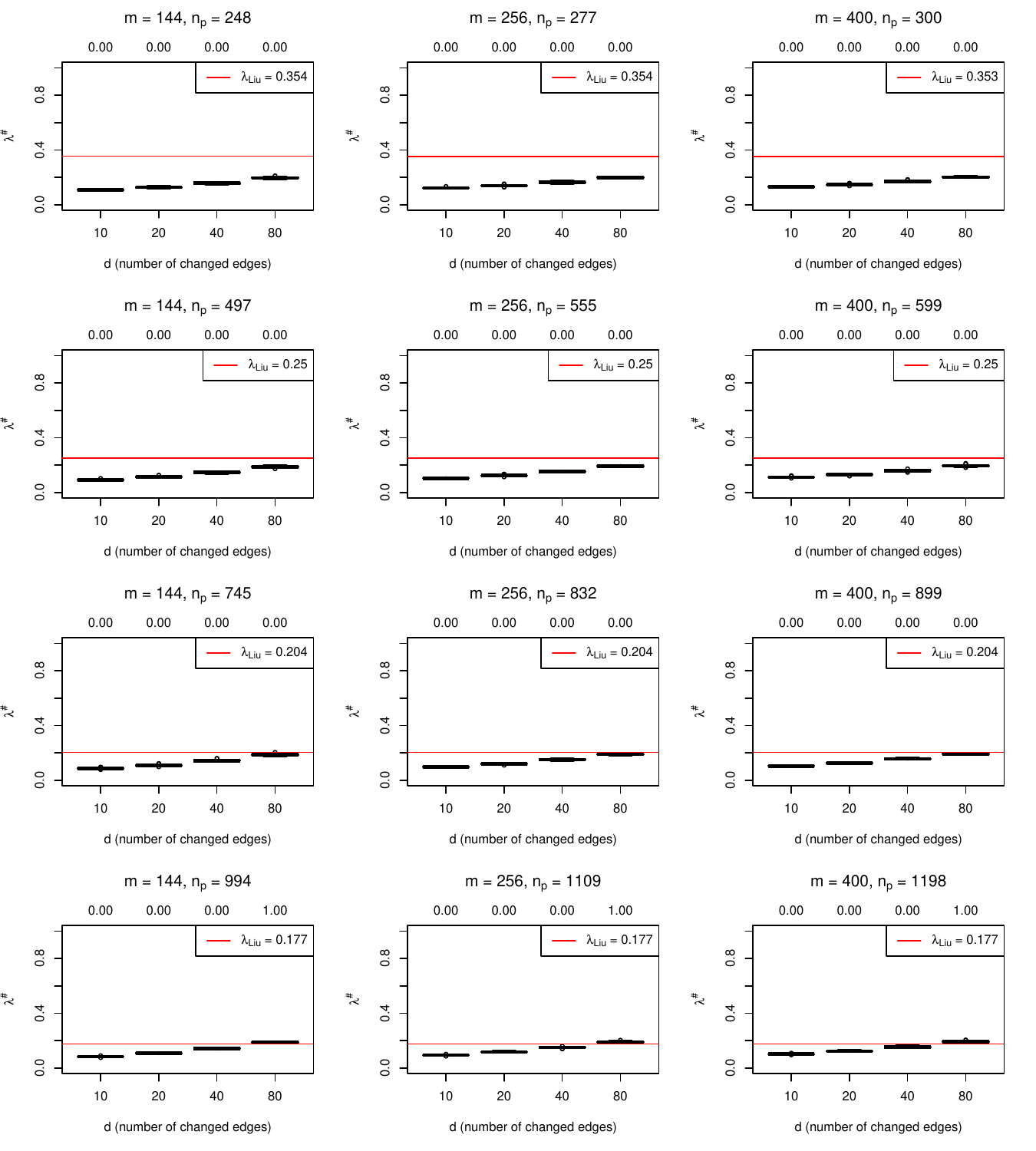}
\caption{
The comparison between the threshold $\lambda^{\#}$ and the tuning parameter $\lambda_{\rm Liu}=2.5\sqrt{\frac{\log m}{n_p}}$ suggested by \citet{MR3662445} under Gaussian graphical model in lattice structure, 
based on~100 replications for sample sizes $n_q=1000$ (fixed) and $n_p/\log m\in\{50,100,150,200\}$, in dimensions $m\in\{12^2, 16^2, 20^2\}$, when the number of changed edges $d\in\{10,20,40,80\}$, for parameter values $\theta_0=2, \theta_1=0.4, \theta_1^*=-0.4$.
}\label{figure:lattice_c4}
\end{figure}

\begin{figure}
\centering
\includegraphics[width=\textwidth]{~./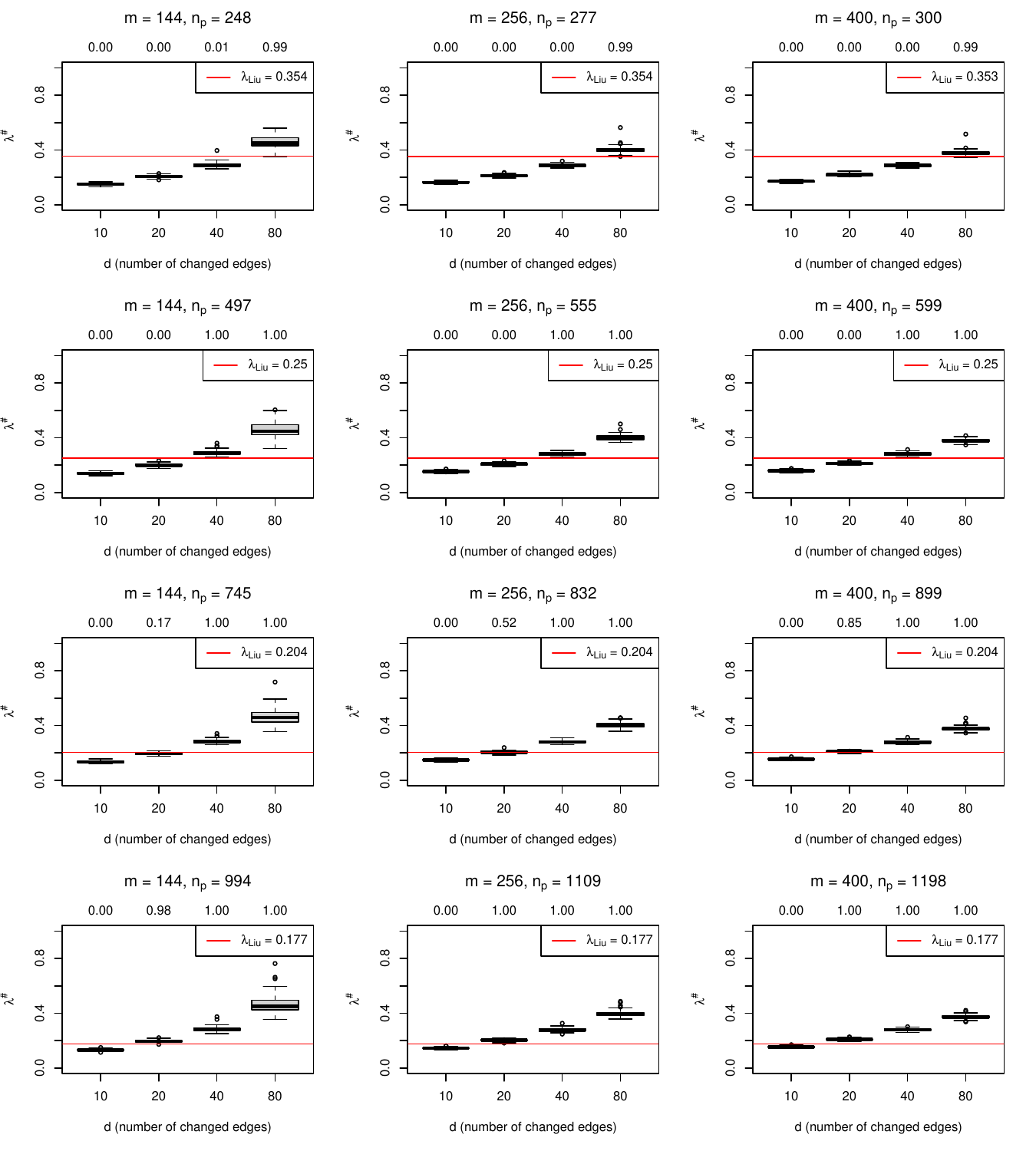}
\caption{
The comparison between the threshold $\lambda^{\#}$ and the tuning parameter $\lambda_{\rm Liu}=2.5\sqrt{\frac{\log m}{n_p}}$ suggested by \citet{MR3662445} under Gaussian graphical model in lattice structure, 
based on~100 replications for sample sizes $n_q=1000$ (fixed) and $n_p/\log m\in\{50,100,150,200\}$, in dimensions $m\in\{12^2, 16^2, 20^2\}$, when the number of changed edges $d\in\{10,20,40,80\}$, for parameter values $\theta_0=2, \theta_1=0.4, \theta_1^*=-0.8$.
}\label{figure:lattice_c45}
\end{figure}

\paragraph{Random graph.}

Next, we study the relationship between $\lambda^{\#}$ and $\lambda_{\rm Liu}=2.5\sqrt{\frac{\log m}{n_p}}$ when $ED_p=ED_q$ are the edge sets of an Erd\H{o}s-Renyi {\it random graph} with $m$ nodes. The probability of drawing an edge between two arbitrary vertices in the random graph is set to be $0.4$. All other settings remain the same as in the previous case of lattice graphs. 

We observe from Figures~\ref{figure:random_c4} and~\ref{figure:random_c45} that $\lambda_{\rm Liu}$ is smaller than the threshold $\lambda^{\#}$ with a higher probability in random structures compared to lattice structures.  In this case, even when the number of changed edges is relatively small, such as $10$, the non-existence issue of a global minimum for the regularized KLIEP loss with the tuning parameter $\lambda_{\rm Liu}$ can occur with a probability as high as~$90\%$.

\begin{figure}
\centering
\includegraphics[width=\textwidth]{~./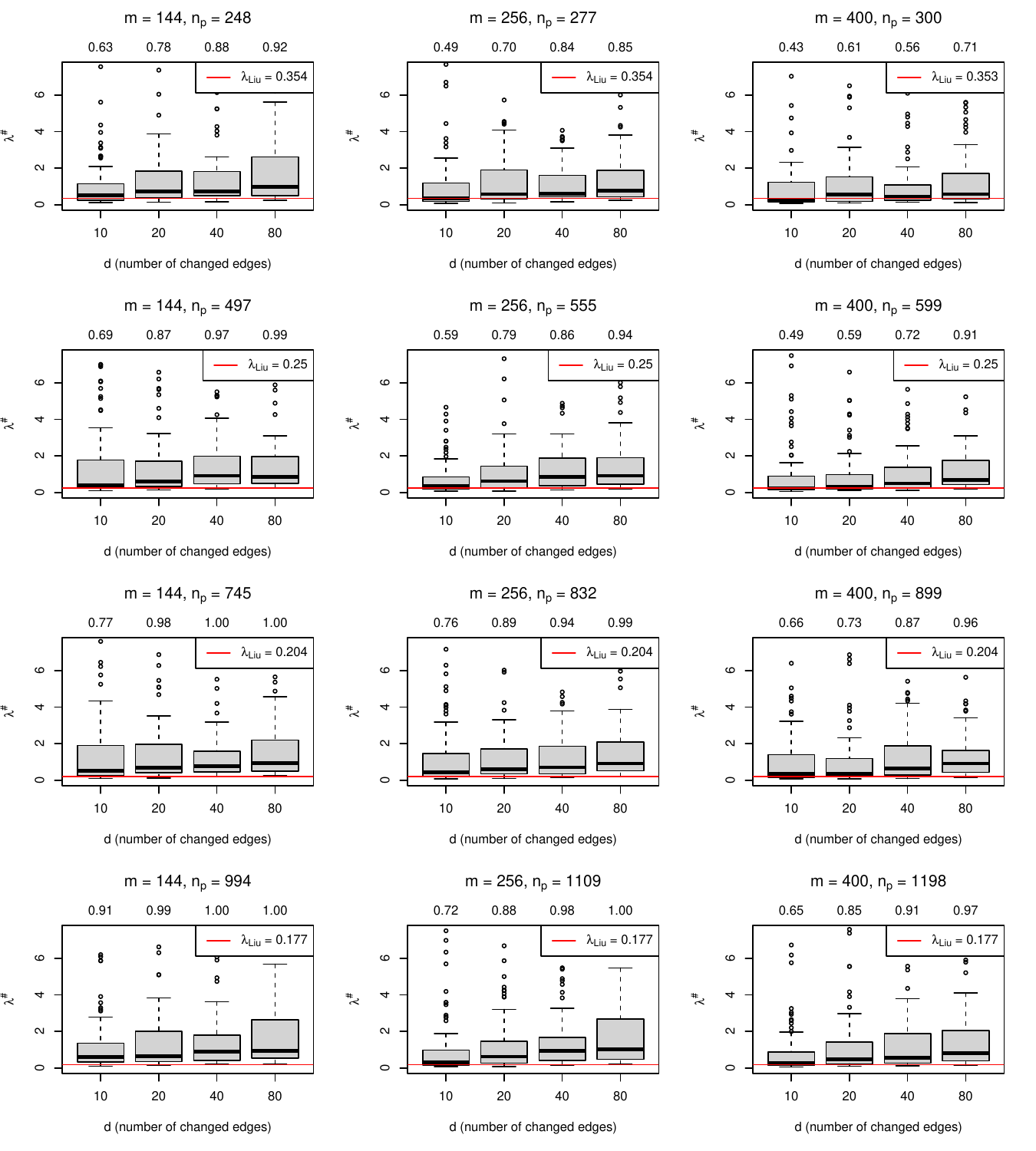}
\caption{
The comparison between the threshold $\lambda^{\#}$ and the tuning parameter $\lambda_{\rm Liu}=2.5\sqrt{\frac{\log m}{n_p}}$ suggested by \citet{MR3662445} under Gaussian graphical model in random structure, 
based on~100 replications for sample sizes $n_q=1000$ (fixed) and $n_p/\log m\in\{50,100,150,200\}$, in dimensions $m\in\{12^2, 16^2, 20^2\}$, when the number of changed edges $d\in\{10,20,40,80\}$, for parameter values $\theta_0=2, \theta_1=0.4, \theta_1^*=-0.4$.
}\label{figure:random_c4}
\end{figure}

\begin{figure}
\centering
\includegraphics[width=\textwidth]{~./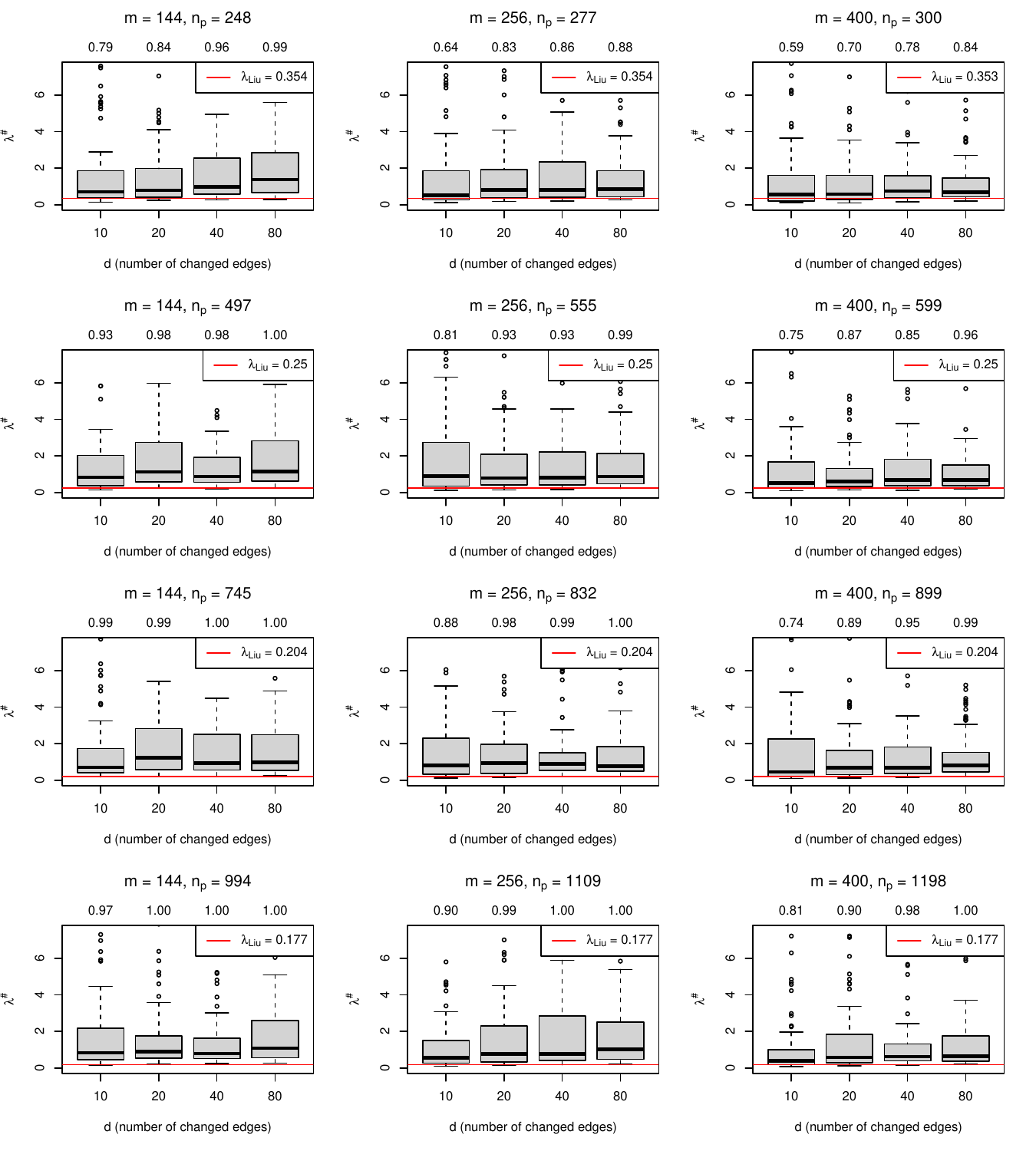}
\caption{
The comparison between the threshold $\lambda^{\#}$ and the tuning parameter $\lambda_{\rm Liu}=2.5\sqrt{\frac{\log m}{n_p}}$ suggested by \citet{MR3662445} under Gaussian graphical model in random structure, 
based on~100 replications for sample sizes $n_q=1000$ (fixed) and $n_p/\log m\in\{50,100,150,200\}$, in dimensions $m\in\{12^2, 16^2, 20^2\}$, when the number of changed edges $d\in\{10,20,40,80\}$, for parameter values $\theta_0=2, \theta_1=0.4, \theta_1^*=-0.8$.
}\label{figure:random_c45}
\end{figure}

\bigskip

In summary, the non-existence issue of a global minimum for the regularized KLIEP when the tuning parameter $\lambda_{\rm Liu}$ is employed can occur with significant probability, even when $d$ is small or moderate. 
Moreover, the non-existence issue does not diminish as the sample size $n_p$ increases when $n_q$ is fixed but sufficiently large. 
This is implicitly suggested by Theorem 1 of \citet{MR3662445}, as $n_p / \log m$ is required to be larger than the rate of $d^2$. 
This could also be attributed to the fact that, in our simulations, $n_q$ is not sufficiently large --- specifically, $n_q = 0.01 n_p^2$ was used in some experiments in \citet{MR3662445}, as suggested by Theorem 1. However, this implies that $n_q$ would need to exceed $10,000$ when $n_p > 1,000$, and such an unbalanced design might rarely occur in real-world practice.

\section{Discussion}\label{sec:dis}

Unlike the empirical squared error loss featured in lasso regression, the empirical KLIEP loss is not always positive. As a result, as shown in Theorem~\ref{theorem:2}, even when a norm-based penalty is added, the global minimum may not exist unless the tuning parameter is sufficiently large. Moreover, the simulations in Section~\ref{subsec:sims} confirm in concrete settings, which parallel that of numerical experiments in the literature, that the (optimal) tuning parameter suggested by \citet{MR3662445} can be too small to ensure the existence of the regularized KLIEP estimator with $\ell_1$-penalty. 

To overcome the possible existence issues for KLIEP, it is desirable to design modifications of the KLIEP loss that avoid unboundedness issues.  As a possible step in this direction, 
we suggest considering a modification that augments the KLIEP loss with an elastic penalty, in which case the minimizer always exists.  This suggestion is related to a proposal of \citet{MR3960930} for high-dimensional one-sample problems approached via score matching.

\begin{theorem}\label{theorem:3}
The empirical KLIEP loss with an elastic net penalty given by
\begin{align*}
    \ell_{\mathrm{KL};\mathrm{els}}(\boldsymbol{\Delta}) 
    \coloneqq\;& \ell_\mathrm{KL}(\boldsymbol{\Delta}) + \lambda_1\Vert\boldsymbol{\Delta}\Vert_{1} + \lambda_2\Vert\boldsymbol{\Delta}\Vert^2 \\
     =\;& -\boldsymbol{\Delta}^{\top}  \bar{\boldsymbol{t}}^{x} + \log\left(\frac{1}{n_y}\sum_{j=1}^{n_y} \exp\{\boldsymbol{\Delta}^{\top}  \boldsymbol{t}^{y}_j\}\right) + \lambda_1\Vert\boldsymbol{\Delta}\Vert_{1} + \lambda_2\Vert\boldsymbol{\Delta}\Vert^2
\end{align*}
with $\lambda_1\ge 0,\lambda_2>0$ always attains a global minimum.
\end{theorem}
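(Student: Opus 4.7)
The plan is to show that $\ell_{\mathrm{KL};\mathrm{els}}$ is continuous and coercive on $\mathbb{R}^k$, and then invoke Lemma~\ref{lem:coercive} to conclude the existence of a global minimizer. Continuity is immediate, since the function is built from continuous ingredients (inner products, the logsumexp map, and the norms $\Vert\cdot\Vert_1$ and $\Vert\cdot\Vert$). The real work is coercivity, i.e., $\ell_{\mathrm{KL};\mathrm{els}}(\boldsymbol{\Delta})\to+\infty$ as $\Vert\boldsymbol{\Delta}\Vert\to\infty$. Unlike in the proof of Theorem~\ref{theorem:1}, no restriction to the subspace $\boldsymbol{U}$ is needed here, because the quadratic penalty $\lambda_2\Vert\boldsymbol{\Delta}\Vert^2$ acts on the full ambient space.

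The key step is to establish an affine lower bound for the unpenalized KLIEP loss. Applying Jensen's inequality to the convex function $\exp$, I would write
\[
\log\!\left(\frac{1}{n_y}\sum_{j=1}^{n_y}\exp\{\boldsymbol{\Delta}^{\top}\boldsymbol{t}^{y}_j\}\right)
\;\ge\; \frac{1}{n_y}\sum_{j=1}^{n_y}\boldsymbol{\Delta}^{\top}\boldsymbol{t}^{y}_j
\;=\; \boldsymbol{\Delta}^{\top}\bar{\boldsymbol{t}}^{y},
\]
where $\bar{\boldsymbol{t}}^{y}\coloneqq \tfrac{1}{n_y}\sum_{j=1}^{n_y}\boldsymbol{t}^{y}_j$. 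Plugging this into the definition of $\ell_\mathrm{KL}$ and using Cauchy--Schwarz gives
\[
\ell_\mathrm{KL}(\boldsymbol{\Delta}) \;\ge\; \boldsymbol{\Delta}^{\top}(\bar{\boldsymbol{t}}^{y}-\bar{\boldsymbol{t}}^{x})
\;\ge\; -C\,\Vert\boldsymbol{\Delta}\Vert,
\qquad C\coloneqq \Vert\bar{\boldsymbol{t}}^{y}-\bar{\boldsymbol{t}}^{x}\Vert.
\]

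Since the two penalty terms are nonnegative, this yields
\[
\ell_{\mathrm{KL};\mathrm{els}}(\boldsymbol{\Delta})
\;\ge\; \lambda_2\Vert\boldsymbol{\Delta}\Vert^2 - C\,\Vert\boldsymbol{\Delta}\Vert + \lambda_1\Vert\boldsymbol{\Delta}\Vert_1
\;\ge\; \lambda_2\Vert\boldsymbol{\Delta}\Vert^2 - C\,\Vert\boldsymbol{\Delta}\Vert.
\]
With $\lambda_2>0$, the right-hand side tends to $+\infty$ as $\Vert\boldsymbol{\Delta}\Vert\to\infty$, which establishes coercivity. Lemma~\ref{lem:coercive} then guarantees that $\ell_{\mathrm{KL};\mathrm{els}}$ attains a global minimum on $\mathbb{R}^k$. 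There is no real obstacle in this argument; its content is simply the observation that $\ell_\mathrm{KL}$ decays at most linearly in $\Vert\boldsymbol{\Delta}\Vert$ along any direction, and the strongly convex Tikhonov-type term $\lambda_2\Vert\boldsymbol{\Delta}\Vert^2$ dominates any such linear behaviour regardless of whether $\bar{\boldsymbol{t}}^{x}$ lies in the polytope $\boldsymbol{C}^{y}$ or not.
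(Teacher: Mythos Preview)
Your proof is correct and follows essentially the same strategy as the paper's: establish continuity and coercivity, then invoke Lemma~\ref{lem:coercive}. The only cosmetic difference is in how the linear lower bound on $\ell_\mathrm{KL}$ is obtained---you use Jensen's inequality to get $\ell_\mathrm{KL}(\boldsymbol{\Delta})\ge -\Vert\bar{\boldsymbol{t}}^{y}-\bar{\boldsymbol{t}}^{x}\Vert\,\Vert\boldsymbol{\Delta}\Vert$, whereas the paper applies Cauchy--Schwarz separately to the linear term and to the log-sum-exp term to get $\ell_\mathrm{KL}(\boldsymbol{\Delta})\ge -(\Vert\bar{\boldsymbol{t}}^{x}\Vert+\max_j\Vert\boldsymbol{t}^{y}_j\Vert)\Vert\boldsymbol{\Delta}\Vert$---but either bound suffices once the quadratic penalty is added.
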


\begin{proof}[Proof of Theorem~\ref{theorem:3}]
It suffices to prove $\ell_{\mathrm{KL};\mathrm{els}}(\boldsymbol{\Delta})$ is continuous and coercive.
The continuity of $\ell_{\mathrm{KL};\mathrm{els}}(\boldsymbol{\Delta})$ is obvious. It also holds that
\begin{align*}
 \ell_{\mathrm{KL};\mathrm{els}}(\boldsymbol{\Delta}) 
 &\ge -\lVert\boldsymbol{\Delta}\rVert  \lVert\bar{\boldsymbol{t}}^{x}\rVert
      -\lVert\boldsymbol{\Delta}\rVert  \max_{1\le j\le n_y}\lVert \boldsymbol{t}^{y}_j\rVert
      +\lambda_2 \lVert\boldsymbol{\Delta}\rVert^2
\end{align*}
and thus
$
\lim\limits_{\Vert\boldsymbol{\Delta}\Vert\to\infty} 
            \ell_{\mathrm{KL};\mathrm{els}}(\boldsymbol{\Delta}) = +\infty.
$
This completes the proof.
\end{proof}

A downside of working with the elastic net is the need to set tuning parameters, here, $\lambda_1$ and~$\lambda_2$. However, we conjecture that good estimation results can be obtained from a fixed small choice of $\lambda_2$, with a value that is set merely as a function of the sample sizes and dimensions. We leave a detailed numerical and theoretical study of elastic net methods for KLIEP to future work.

\section*{Acknowledgements}
This project has received funding from the European Research Council (ERC) under the European Union's Horizon 2020 research and innovation programme (grant agreement No 883818).

\bibliographystyle{apalike-three}
\bibliography{biblio}

\end{document}